\title{Near Ascending HNN-Extensions and a Combination Result for Semistability at Infinity}
\author{Michael Mihalik}
\newtheorem{theorem}{Theorem}[section]
\newtheorem{lemma}[theorem]{Lemma}
\newtheorem{remark}[theorem]{Remark}
\newtheorem{example}[theorem]{Example}
\newcounter{claimnum}
\newcounter{definitionnum}
\newenvironment{proof}{\addvspace{12pt}\noindent{\bf Proof:}}{
$\Box$\par\addvspace{12pt}}
\date{\today}
\begin{document}
\maketitle

\begin{abstract}
Semistability at $\infty$ is an asymptotic property of finitely presented groups that is needed in order to effectively define the fundamental group at $\infty$ for a 1-ended group. It is an open problem whether or not all finitely presented groups have semistable fundamental group at $\infty$. While many classes of groups are known to contain only semistable at $\infty$ groups, there are only a few combination results for such groups. Our main theorem is such a result. 
 
 \medskip
 
\noindent {\bf Main Theorem.} Suppose $G$ is the fundamental group of a connected reduced graph of groups, where each edge group is infinite and finitely generated, and each vertex group is finitely presented and either 1-ended and semistable at $\infty$ or has an edge group of finite index. Then $G$ is 1-ended and semistable at $\infty$.

An important part of the proof of this result is the semistability part of the following:

\medskip

\noindent{\bf Theorem.} Suppose $H_0$ is an infinite finitely presented group, $H_1$ is a subgroup of finite index in $H_0$, $\phi:H_1\to H_0$ is a monomorphism and $G=H_0\ast_\phi$ is the resulting HNN extension. Then $G$ is 1-ended and semistable at $\infty$. If additionally, $H_0$ is 1-ended, then $G$ is simply connected at $\infty$. 
\end{abstract}

\section{Introduction}

A connected locally finite complex $K$ has {\it semistable fundamental group at} $\infty$ if any two proper rays converging to the same end of $K$ are properly homotopic. If for any compact $C\subset K$ there is a compact $D\subset K$ such that loops in $K-D$ are homotopically trivial in $K-C$, then $K$ is {\it simply connected at $\infty$}. If $G$ is a finitely presented group then $G$ has semistable (respectively simply connected) fundamental group at $\infty$ if for some (equivalently any) finite complex $X$ with $\pi_1(X)=G$, the universal cover of $X$ has semistable (respectively simply connected) fundamental group at $\infty$. Both semistability and simple connectivity at $\infty$ are quasi-isometry invariants of finitely presented groups (\cite{G} and \cite{BR93}).  If a finitely presented group $G$ has semistable fundamental group at $\infty$ (respectively is simply connected at $\infty$) then $H^2(G,\mathbb ZG)$ is free abelian (respectively trivial) (See Theorems 16.5.1 and 16.5.2, \cite{G}). A open question (apparently due to H. Hopf) ask if $H^2(G,\mathbb ZG)$ is free abelian for all finitely presented groups $G$. There are many classes of groups, all of whose (finitely presented) members are know to have semistable fundamental group at $\infty$. These include word hyperbolic groups (combining work of B. Bowditch \cite{Bow99B},  G. Levitt \cite{Lev98}, G. Swarup \cite{Swarup} and M. Bestvina and G. Mess \cite{BM91}), CAT(0) cube groups (S. Shepherd \cite{SSh}), Artin and Coxeter groups (Theorems 1.1 and 1.4, \cite{M96}), many relatively hyperbolic groups ((Theorem 1.1, \cite{MS18}) and (Theorem 1.5, \cite{HM20})) and groups containing a finitely generated infinite subcommensurated subgroup of infinite index (Theorem 1.9, \cite{M6}). There are only a few semistability combination results. Perhaps the most significant one states:

\begin{theorem}\label{MTComb}(\cite{MT1992}) {\bf (MTComb)} If $G$ is the fundamental group of a finite graph of groups where each vertex group is finitely presented with semistable fundamental group at $\infty$, and each edge group is finitely generated, then $G$ has semistable fundamental group at $\infty$. 
\end{theorem}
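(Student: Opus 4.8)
The plan is to reduce the finite graph of groups to the single-edge case and then prove an amalgamated-product / HNN combination result by a direct proper-homotopy argument in a tree of spaces. For the reduction I would induct on the number of edges: a connected finite graph of groups with at least one edge has either a separating edge, exhibiting $G$ as an amalgam $A\ast_C B$, or a non-separating edge, exhibiting $G$ as an HNN extension $A\ast_C$. In both cases the smaller graph(s) of groups still have finitely presented vertex groups and finitely generated edge groups, so their fundamental groups are finitely presented and, by the inductive hypothesis, semistable at $\infty$. (Finite presentation of the fundamental group of a finite graph of groups with finitely presented vertex and finitely generated edge groups is standard.) Thus it suffices to prove that if $A,B$ (resp. $A$) are finitely presented and semistable at $\infty$ and $C$ is finitely generated, then $A\ast_C B$ (resp. $A\ast_C$) is semistable at $\infty$. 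Semistability is nontrivial only for one-ended groups, so I would treat the one-ended case as the heart of the matter; if the resulting group is multi-ended (possible when $C$ is finite) I would invoke Dunwoody accessibility to reduce to the one-ended vertex groups of a splitting over finite subgroups, each again of the required type, since semistability of a finitely presented group is governed by the semistability of its one-ended factors.

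For the one-ended single-edge case, build a finite graph of spaces $X$ with $\pi_1(X)=G$: a finite complex $X_v$ with $\pi_1(X_v)$ the vertex group for each vertex, a wedge of circles $Y_C$ (one per generator of $C$) for each edge, and gluing cylinders $Y_C\times[0,1]$. Its universal cover $\tilde X$ carries a $G$-equivariant projection $\pi\colon \tilde X\to T$ to the Bass--Serre tree, where the preimage of each vertex deformation retracts to a copy of the universal cover of a vertex complex (a \emph{vertex space}, semistable at $\infty$ by hypothesis) and the preimage of each open edge is an \emph{edge space} $\Gamma_C\times(0,1)$, with $\Gamma_C$ the Cayley graph of $C$. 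The point of requiring $C$ \emph{finitely generated} is exactly that $\Gamma_C$ is locally finite with cocompact $C$-action, so each edge space is a locally finite complex properly and coarsely connected to its two neighboring vertex spaces; this is what lets homotopies be transported across edges \emph{properly}. The goal is to show that any two proper rays $r_0,r_1$ converging to the same end of $\tilde X$ are properly homotopic.

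The argument proceeds by a case analysis on the coarse image of a ray under $\pi$. If that image in $T$ is bounded, then $r$ is eventually contained in a single vertex space $V$ and converges to an end of $V$; since $V$ is semistable at $\infty$, two such rays to the same end are properly homotoped inside $V$ and the homotopy extends trivially to $\tilde X$. If instead the image is unbounded, it follows a geodesic ray in $T$, and I would first put $r$ in a normal form crossing a coherent sequence of edge spaces along that tree ray, using connectivity of each $\Gamma_C$ to bridge successive crossings and the uniform (compact-to-larger-compact) form of vertex-group semistability to fill the loops that this bridging creates inside each vertex space. Two rays converging to the same end determine the same end of $T$, so their normal forms run along the same tree ray, and I would splice them together edge space by edge space.

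The main obstacle is \emph{properness} of the assembled homotopy: each local move --- filling a loop inside a vertex space, or pushing a path across an edge space --- must be arranged to take place far out toward infinity, so that the total homotopy eventually leaves every compact set. This forces a telescoping construction over a nested exhaustion of $\tilde X$ by compacta, in which the uniform semistability constant of each vertex group (for each compact set a controlled larger one outside which loops can be pushed further out) is combined with the local finiteness and cocompactness coming from finite generation of $C$ (guaranteeing the cross-edge pushes stay in controlled compacta and are themselves proper). Managing this bookkeeping simultaneously across the infinitely many vertex and edge spaces strung along a single tree ray, so that the control parameters cohere into one proper homotopy, is the technical heart of the proof; everything else is the geometric set-up together with the routine verifications that $G$ is finitely presented and (in the relevant case) one-ended.
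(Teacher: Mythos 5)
This theorem is not proved in the paper at all: it is quoted as an external result from \cite{MT1992}, where its proof occupies an entire Memoirs volume. So your proposal must stand on its own, and as written it has two genuine gaps. The first is structural: your case analysis on the projection $\pi\circ r$ to the Bass--Serre tree $T$ is false in both branches. A proper ray with bounded $T$-image need not be eventually contained in a single vertex space --- when the edge group $C$ is infinite, the edge space $\Gamma_C\times(0,1)$ is noncompact, so a proper ray can oscillate between two adjacent vertex spaces infinitely often, crossing the same edge space further and further out (take $\mathbb{Z}^2\ast_{\mathbb{Z}}\mathbb{Z}^2$ and a ray that repeatedly crosses the strip at ever greater depth). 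Likewise, an unbounded $T$-image need not follow a geodesic ray in $T$: the ray can go to depth $1$ in one branch, return to the base (crossing each intermediate edge space far out, preserving properness), go to depth $2$ in a second branch, and so on, so its projection is unbounded but converges to no end of $T$. For the same reason your claim that two rays converging to the same end of $\tilde X$ ``determine the same end of $T$'' fails: when $G$ is one-ended, \emph{every} proper ray converges to the single end of $\tilde X$, yet one ray may stay in a vertex space while another runs down a $T$-geodesic. There is no well-defined normal form ``along a tree ray'' to splice, and the actual Mihalik--Tschantz argument must handle arbitrary proper rays and homotopies precisely because of this.

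The second gap is that the part you correctly identify as ``the technical heart'' --- assembling infinitely many local fillings and cross-edge pushes into a single proper homotopy --- is described but not carried out. Saying that the bookkeeping ``forces a telescoping construction over a nested exhaustion'' names the difficulty without resolving it; this coherence problem is exactly where the known difficulty of the theorem lives (compare the present paper, where even the special case of Theorem \ref{MMFI} requires the full apparatus of Lemmas \ref{SSL1}--\ref{Like4.4} to control such slides, including the degree argument of Lemma \ref{Like4.4} to move paths \emph{downward}, a phenomenon your outline never confronts). Your outer reduction is fine: induction on edges to the one-edge case is indeed how the graph version follows from the amalgam and HNN cases in \cite{MT1992}, and the multi-ended reduction via splittings over finite subgroups is legitimate provided you cite the (nontrivial, but prior) theorem that semistability of a finitely presented group is detected on the one-ended vertex groups of such a decomposition. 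But with the central dichotomy false and the properness assembly only gestured at, the proposal is a plausible plan rather than a proof.
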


Notice that finitely generated free groups trivially have semistable fundamental group at $\infty$. Exotic 1-ended groups can be written as $G_1\ast_{G_3} G_2$ for $G_i$ a finitely generated free group. These groups have semistable fundamental group at $\infty$ by Theorem \ref{MTComb}. 

\begin{example} \label{Examples3} {\bf (Examples3)} Let $F_i$ be the free group of rank $i>0$. Three groups $\Lambda_1$, $\Lambda_2$ and $\Lambda_3$ are constructed in \cite{Ratt07}.  Each group $\Lambda_i$ can be decomposed in two ways as amalgamated products $F_9\ast_{F_{81}} F_9$, such that $F_{81}$ has index 10 in both $F_9$ factors. The group $\Lambda_1$ is a finitely presented, torsion free simple group. The groups $\Lambda_2$ and $\Lambda_3$ are not simple, but $\Lambda_2$ is virtually simple and $\Lambda_3$ has no non-trivial finite quotients.
\end{example}

Since the $F_{81}$ edge group has finite index in each vertex group, it is commensurated in each $\Lambda_i$. This gives a second way of seeing the $\Lambda_i$ have semistable fundamental group at $\infty$ (see Theorem \ref{MainCM}). Our main theorem generalizes this phenomenon. 
\begin{theorem} \label{SSDecomp} {\bf(SSDecomp)}
Suppose $G$ is the fundamental group of a connected reduced graph of groups, where each edge group is infinite and finitely generated, and each vertex group is finitely presented and either 1-ended and semistable at $\infty$ or has an edge group of finite index. 
Then $G$ is 1-ended and semistable at $\infty$. 
\end{theorem}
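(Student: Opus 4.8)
The plan is to combine three inputs: the combination theorem for semistability (Theorem \ref{MTComb}), the near-ascending HNN Theorem stated in the abstract, and the commensurated-subgroup criterion of \cite{M6}, organizing everything as an induction on the number of edges of the (finite, connected, reduced) graph of groups $\mathcal G$.

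First I would dispose of one-endedness, which I expect to be the easier half. Since $\mathcal G$ is reduced, $G$ acts minimally and cocompactly on its Bass--Serre tree $T$ with infinite edge stabilizers, and every vertex group is infinite, being either $1$-ended or a group containing a finite-index infinite edge group. Hence $G$ is infinite and not virtually cyclic. I would then rule out any splitting of $G$ over a finite subgroup: such a splitting gives an action on a tree with finite edge stabilizers, incompatible with the infinite edge stabilizers of $T$ by an accessibility/Stallings-ends argument, so by Stallings' theorem $G$ is one-ended. One point to stress is that one-endedness is a feature of the infinite edge groups and the gluing rather than of the vertex groups, since a finite-index-edge vertex group (such as the free factor $F_9$ in Example \ref{Examples3}) may itself have infinitely many ends.

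For semistability, call a vertex group \emph{good} if it is $1$-ended and semistable and \emph{bad} if it merely has an incident edge group of finite index. If every vertex group were good, Theorem \ref{MTComb} would finish the proof at once, since the edge groups are finitely generated; the whole difficulty is to remove bad vertices, which are not assumed semistable. The mechanism is a dichotomy for a bad vertex $v$ carrying a distinguished edge $e$ with edge group $E$ of finite index in $G_v$. When the local finite-index structure produces an infinite, finitely generated, infinite-index \emph{commensurated} subgroup of $G$ (for instance the $\Lambda_i$-type configuration of Example \ref{Examples3}, where $E$ has finite index on both ends of $e$ and is therefore commensurated by both adjacent vertex groups, hence by all of $G$), the criterion of \cite{M6} yields semistability directly. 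In the remaining case $E$ need not be commensurated, since after peeling $e$ the block governing $v$ is a near-ascending HNN extension $H_0\ast_\phi$ with $[H_0:H_1]<\infty$ and $\phi(H_1)$ possibly of infinite index; this is exactly the configuration the near-ascending HNN Theorem is built for. Intuitively, the finite-index edge lets proper rays in the vertex space escape into nearby edge spaces in bounded distance, so the non-semistability of $G_v$ is invisible at $\infty$. Applying that theorem (and the underlying tree-of-spaces analysis) absorbs $v$ into a single good vertex group, indeed one that is simply connected at $\infty$ when $G_v$ is $1$-ended, which strictly lowers the number of bad vertices while keeping the edge groups finitely generated, so the induction closes via Theorem \ref{MTComb}.

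The main obstacle I anticipate lies precisely in the bad-vertex step, where $G_v$ is not assumed semistable: one must construct the required proper homotopies by hand in the tree of spaces over the Bass--Serre tree of the near-ascending piece, exploiting only that the finite-index edge group is infinite and finitely generated. Making the ``absorb a bad vertex'' move inductively stable is the delicate bookkeeping: a bad vertex may carry several incident edges, the distinguished finite-index edge may be separating rather than a loop, and I must run the analysis uniformly in both cases without destroying the finitely-generated-edge-group hypothesis or the one-endedness already established, while tracking simple connectivity at $\infty$ for the $1$-ended pieces so that Theorem \ref{MTComb} can glue them. The deepest single input remains the near-ascending HNN Theorem itself, since in the non-commensurated case there is neither a commensurated subgroup to invoke nor a semistable base group to inherit from.
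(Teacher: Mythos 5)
Your proposal assembles the right toolbox (Theorem \ref{MTComb}, Theorem \ref{MMFI}, the commensurated-subgroup criterion, induction on the graph), but the central case of the theorem falls through the dichotomy you set up for a bad vertex. You propose: either the finite-index edge group is commensurated in $G$ (finite index on both sides), or ``after peeling $e$ the block governing $v$ is a near ascending HNN extension.'' The second alternative only occurs when the distinguished edge is a \emph{loop} at $v$. The genuinely hard configuration --- the one the paper is built around --- is a separating edge $e(v,w)$ with $G_e$ of finite index in $G_v$ but infinite index in $G_w$, where $G_w$ is itself bad. There is then no HNN extension visible at $v$, and no commensurated subgroup to invoke; you flag this as ``delicate bookkeeping'' but supply no mechanism for it, and no local absorption move exists. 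The paper's resolution is global: after reductions that eliminate good-adjacent vertices (via Lemma \ref{SS2} and Theorem \ref{MComb}) and both-sides-finite-index edges (via Theorem \ref{MainCM}), one follows the forced chain of bad vertices $v_0, v_1, v_2,\dots$ --- each $v_{i+1}$ is again bad and contributes a new finite-index edge $e_{i+1}$ --- until finiteness of the graph closes it into a simple loop $\mathcal L$. The fundamental group $L$ of $\mathcal L$ is realized as an HNN extension of the fundamental group of the underlying path, and the key observation is that $C=G_{e_0}\cap\cdots\cap G_{e_n}$ has \emph{finite index} in $G_{v_0}$, so that $\langle G_{v_0},t\rangle$ is a near ascending HNN extension to which Theorem \ref{MMFI} applies; each vertex group of $\mathcal L$ likewise sits inside a $1$-ended semistable subgroup of $L$, and repeated use of Theorem \ref{MComb} (not Theorem \ref{MTComb}) glues these into semistability of $L$, after which the loop collapses and the induction on the number of vertices proceeds. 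Without the ``chase the chain to a loop and intersect the edge groups'' step, your induction has no way to decrease the number of bad vertices.

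Two secondary points. First, your one-endedness argument (Stallings plus an asserted incompatibility between a finite-edge-group splitting and the given Bass--Serre tree) is not justified as stated and is not how the paper proceeds; one-endedness comes packaged with semistability out of the same combination results (Theorem \ref{MMFI}, Lemma \ref{SS2}, Theorems \ref{MainCM} and \ref{MComb}), so no separate accessibility argument is needed. Second, even in the cases your dichotomy does cover, ``absorbing $v$ into a single good vertex group'' needs care: the new subgroup is generated by a vertex group together with a stable letter or a neighboring vertex group, and one must check the collapsed graph of groups still satisfies the hypotheses (this is exactly what the paper's reductions (0)--(2) and Lemma \ref{SS2} are for).
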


First a clarification. If an edge $e$ is a loop at a vertex $v$ in our graph, then the group $G_v$ has two subgroups $G_1$ and $G_2$, both of which are isomorphic to $G_e$. There is an isomorphism $\phi:G_1\to G_2$ determining a subgroup of $G$ that is an HNN-extension of $G_v$. If either $G_1$ or $G_2$ has finite index in $G_v$, we say $G_e$ has finite index in $G_v$.  

In the setting of this theorem, even when the graph is an edge path loop (with at least 2 vertices) and each edge group has finite index in its initial vertex, it seems that one cannot use commensurability ideas in order to prove semistability. Instead, for such a loop, the intersection of the edge groups will have finite index in the first vertex group producing an important  {\it near ascending HNN-extension} subgroup as considered in the following result.  

\begin{theorem} \label{MMFI} {\bf (MMFI)} Suppose $H_0$ is an infinite finitely presented group, $H_1$ is a subgroup of finite index in $H_0$, $\phi:H_1\to H_0$ is a monomorphism and $G=H_0\ast_\phi$ is the resulting HNN extension. Then $G$ is 1-ended and semistable at $\infty$. If additionally, $H_0$ is 1-ended, then $G$ is simply connected at $\infty$. 
\end{theorem}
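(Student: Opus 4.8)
The plan is to first re-present $G$ as an \emph{ascending} HNN extension with finite-index image, where the near-ascending hypothesis becomes transparent. Writing $\psi(g)=t^{-1}gt$ and $L=\bigcup_{n\ge 0}t^{-n}H_0t^{n}$, one checks that $t^{-1}Lt=\psi(L)=\bigcup_{n\ge 1}t^{-n}H_0t^{n}\subseteq L$, that $\psi$ is an injective endomorphism of $L$, and that $G=L\ast_\psi$. Moreover $H_0\cap\psi(L)=H_0\cap t^{-1}H_0t=H_1$, and the map $hH_1\mapsto h\psi(L)$ is a bijection $H_0/H_1\to L/\psi(L)$, so $[L:\psi(L)]=[H_0:H_1]<\infty$. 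Thus $G$ is an ascending HNN extension of $L$ whose defining endomorphism has finite-index image of index $k=[H_0:H_1]$. The Bass--Serre tree $T$ of this splitting is locally finite of valence $k+1$: each vertex has a single ascending edge (the identity side) and $k$ descending edges (the $\psi$ side). Following ascending edges from any vertex produces a ray, and all such rays converge to a single end $\omega$ of $T$, fixed by $L$. The price of this clean picture is that $L$ is typically an infinite ascending union of finite-index overgroups of $H_0$ (as with $\mathbb Z[1/2]\le BS(1,2)$) and so is \emph{not} finitely generated; this is the central difficulty below.

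For one-endedness, let $N=\ker(G\to\mathbb Z)$ be the kernel of the retraction $t\mapsto 1$, $H_0\mapsto 0$. Then $N\supseteq H_0$ is infinite, $N$ is normal, and $G/N\cong\mathbb Z$ is infinite, so $N$ has infinite index. A finitely generated group with an infinite normal subgroup of infinite index is one-ended (see \cite{G}), so $G$ is one-ended.

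For semistability I would build a finite graph-of-spaces model $X$ for the original $H_0$-splitting: a finite presentation complex $Y$ with $\pi_1Y=H_0$, the finite cover $Y_1\to Y$ corresponding to $H_1$ (finite-sheeted since $[H_0:H_1]<\infty$), and the gluing $Y_1\times[0,1]$ whose two ends map to $Y$ by the covering projection and by a realization $f$ of $\phi$. In the universal cover $\tilde X$, a tree of spaces over the $H_0$-tree, each vertex sheet is a copy of $\tilde Y$ and each edge tube is a copy of $\widetilde{Y_1}\times[0,1]=\tilde Y\times[0,1]$, since a finite cover has the same universal cover. The near-ascending hypothesis is exactly the statement that on the covering-projection end each tube is glued to a sheet by a \emph{homeomorphism} $\tilde Y\to\tilde Y$, while on the $f$ end it is glued by a proper $\phi$-equivariant map. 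Translating the $L\ast_\psi$ analysis into $\tilde X$, there is a single coarse direction to infinity, toward $\omega$, along which consecutive sheets are matched by these homeomorphic tubes. The strategy is to show that every proper ray is properly homotopic to one running monotonically toward $\omega$, and that any two such $\omega$-bound rays are properly homotopic, by sliding along the homeomorphic ascending tubes in a telescoping fashion; the finite index $k$ is used to keep these homotopies proper, bounding how far a homotopy can spread as it is pushed up the tree. This yields that proper rays converging to the same end are properly homotopic, i.e. semistability at $\infty$. The main obstacle is precisely that $L$ is not finitely presented, so one cannot replace $\tilde X$ by a finite mapping telescope of a compact $K(L,1)$; instead the proper-homotopy argument must be carried out directly in the $H_0$-model, using the finite approximations $t^{-n}H_0t^{n}$ to $L$ and the finite index $[H_0:H_1]$ to control properness at each stage.

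Finally, suppose $H_0$ is one-ended, so each sheet $\tilde Y$ is one-ended and the complement of any compact set in a sheet is connected. Given a loop far out in $\tilde X$, I would first push it, via the semistability homotopies, into a sheet near the $\omega$-direction, then use one-endedness of that sheet to slide the loop arbitrarily far out within the sheet, and finally cap it off by pushing it across an ascending (homeomorphic) tube into the parent sheet, where the extra room provided by the ascending HNN structure lets it be contracted without returning to a prescribed compact set. Carried out uniformly this gives simple connectivity at $\infty$. The one-endedness of $H_0$ is essential here: when the sheets have more than one end the complementary regions disconnect and loops at infinity need not die, as already for $H_0=\mathbb Z$ in $BS(1,2)$, which is semistable but not simply connected at $\infty$; this is why only semistability is asserted in general.
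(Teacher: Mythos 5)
Your opening reformulation does not hold, and the argument built on it inherits the problem. For $L=\bigcup_{n\ge 0}t^{-n}H_0t^{n}$ to be a subgroup the union must be directed, i.e.\ $H_0\subseteq t^{-1}H_0t$, which by Britton's lemma is equivalent to $\phi$ being onto; moreover $H_0\cap t^{-1}H_0t=\phi(H_1)$, not $H_1$, and nothing is assumed about the index of $\phi(H_1)$ in $H_0$. So $G\neq L\ast_\psi$ in general, the claimed bijection $H_0/H_1\to L/\psi(L)$ is not available, and the Bass--Serre tree of the given splitting has valence $[H_0:H_1]+[H_0:\phi(H_1)]$ at each vertex, which is infinite whenever $\phi(H_1)$ has infinite index --- it is not the locally finite $(k+1)$-valent tree you describe. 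The one-endedness argument also fails: the principle ``a finitely generated group with an infinite normal subgroup of infinite index is one-ended'' requires the normal subgroup to be finitely generated ($F_2$ with its commutator subgroup is a counterexample otherwise), and $\ker(G\to\mathbb Z)$ is not finitely generated here even in the ascending case (for $BS(1,2)$ it is $\mathbb Z[1/2]$). Worse, your argument never invokes the finite-index hypothesis, yet the conclusion genuinely needs it: $\langle a,b,t\mid t^{-1}at=b\rangle\cong F_2$ is an HNN extension of an infinite finitely presented group over an infinite finitely generated (infinite-index) subgroup, it has infinitely many ends, and your kernel argument would ``prove'' it one-ended. The paper instead proves one-endedness by a direct Cayley-graph argument (Theorem \ref{MMFI1}) that slides paths up one level at a time through the conjugation relations, where the finite index of $H_1$ is exactly what lets one reach a $t$-edge from any vertex of $v\Gamma_0$ by a uniformly bounded detour.

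Your third and fourth paragraphs are strategy statements rather than proofs, and the strategy as stated rests on the (false) single-ascending-direction picture. The points that actually carry the weight in Section \ref{PFMMFI} are absent: keeping the stacked level-raising homotopies away from a fixed compact set $C$ while remaining based on the ray $r$ (which forbids removing backtracking and forces Lemma \ref{UpOut}); moving $t$-edges horizontally out of an arbitrary compact set $E$ (Lemma \ref{push}); and, above all, the degree/pigeonhole argument of Lemma \ref{Like4.4}, which is what permits certain $\mathcal H_0$-subpaths in the critical level $A-1$ to be pushed downward and outward rather than upward, and which is where finite presentability (not merely finite generation) of $H_0$ enters. Your own remark that ``the proper-homotopy argument must be carried out directly in the $H_0$-model, using the finite approximations \dots to control properness at each stage'' is a description of the theorem's content, not an argument for it. The intuition in your final paragraph for simple connectivity at $\infty$ (push loops up a level at a time, using one-endedness of each sheet to reconnect, with $BS(1,2)$ showing why one-endedness of $H_0$ is needed) does match the paper's proof in spirit, but as written the proposal does not constitute a proof of any of the three assertions.
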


If $H_0=H_1$ then $G$ is an {\it ascending HNN-extension} with base group $H_0$. This case of Theorem \ref{MMFI} was proved as (Theorem 3.1,  \cite{HNN1}), but is insufficient for our purposes. The proof of Theorem \ref{MMFI} is fundamentally  more difficult than that of (Theorem 3.1, \cite{HNN1}). This theorem is a critical piece of our proof of the Main Theorem. 

Semistability preliminaries are covered in \S \ref{SS}. We prove that a near ascending HNN-extension of an infinite finitely generated group is 1-ended in \S \ref{NHH1E}. The proof of  Theorem \ref{MMFI} is presented in \S \ref{PFMMFI} and the proof of the Main Theorem is concluded in \S \ref{PMT}.

\section{Semistability and End Preliminaries {\bf (SS)}} \label{SS}

While semistability makes sense for multiple ended spaces, we are only interested in 1-ended spaces in this article. Suppose $K$ is a 
locally finite connected CW complex. A {\it ray} in $K$ is a continuous map $r:[0,\infty)\to K$. A continuous map $f:X\to Y$ is {\it proper} if for each compact subset $C$ of $Y$, $f^{-1}(C)$ is compact in $X$. Proper rays $r,s:[0,\infty)\to K$ {\it converge to the same end} if for any compact set $C$ in $K$, there is an integer $k(C)$ such that $r([k,\infty))$ and $s([k,\infty))$ belong to the same component of $K-C$. 
The space $K$ has {\it semistable fundamental group at $\infty$} if any two proper rays $r,s:[0,\infty)\to K$ that converge to the same end
are properly homotopic  (there is a proper map $H:[0,1]\times [0,\infty)\to X$ such that $H(0,t)=r(t)$ and $H(1,t)=s(t)$). Note that when $K$ is 1-ended, this means that $K$ has semistable fundamental group at $\infty$ if any two proper rays in $K$ are properly homotopic. 
Suppose  $C_0, C_1,\ldots $ is a collection of compact subsets of a locally finite 1-ended complex $K$ such that $C_i$ is a subset of the interior of $C_{i+1}$ and $\cup_{i=0}^\infty C_i=K$, and $r:[0,\infty)\to K$ is proper, then $\pi_1^\infty (K,r)$ is the inverse limit of the inverse system of groups:
$$\pi_1(K-C_0,r)\leftarrow \pi_1(K-C_1,r)\leftarrow \cdots$$
This inverse system is pro-isomorphic to an inverse system of groups with epimorphic bonding maps if and only if $K$ has semistable fundamental group at $\infty$ (see Theorem 2.1 of \cite{M1} or Theorem 16.1.2 of \cite{G}).

(Theorem 2.1, \cite{M1}) and (Lemma 9, \cite{M86}) promote several equivalent notions of semistability. We will use the second formulation in the next result to prove Theorem \ref{MMFI} and the third formulation in the proof of Theorem \ref{SSDecomp}.
\begin{theorem}\label{ssequiv} {\bf (ssequiv)}
Suppose $K$ is a 1-ended, locally finite and connected CW-complex. Then the following are equivalent:
\begin{enumerate}
\item $K$ has  semistable fundamental group at $\infty$.
\item For some (equivalently any) proper ray $r:[0,\infty )\to K$ and compact set $C$, there is a compact set $D$ such that for any third compact set $E$ and loop $\alpha$ based on $r$ and with image in $K-D$, $\alpha$ is homotopic rel$\{r\}$ to a loop in $K-E$, by a homotopy with image in $K-C$. 
\item For any compact set $C$ there is a compact set $D$ such that if $r$ and $s$ are rays based at $v$ and with image in $K-D$, then $r$ and $s$ are properly homotopic rel$\{v\}$, by a proper homotopy in $K-C$. 

\medskip

\hbox{If in addition $K$ is simply connected, then:}

\item If $r$ and $s$ are rays based at $v$ then
$r$ and $s$ are properly homotopic $rel\{v\}$.
\end{enumerate}
\end{theorem}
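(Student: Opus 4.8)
The plan is to reduce every formulation to the pro-epimorphic characterization of semistability already recorded before the statement, namely that $K$ is semistable at $\infty$ if and only if the inverse sequence
$$\pi_1(K-C_0,r)\leftarrow \pi_1(K-C_1,r)\leftarrow\cdots$$
(with bonds induced by inclusion, basepoints joined along a fixed proper ray $r$ and a fixed exhaustion $C_0\subset C_1\subset\cdots$) is pro-isomorphic to one with epimorphic bonding maps. For inverse sequences of groups this pro-epimorphic condition is equivalent to the Mittag-Leffler condition: for each $n$ there is an $m\geq n$ such that $\mathrm{im}\big(\pi_1(K-C_k,r)\to\pi_1(K-C_n,r)\big)=\mathrm{im}\big(\pi_1(K-C_m,r)\to\pi_1(K-C_n,r)\big)$ for all $k\geq m$. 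First I would record this standard pro-homotopy lemma, and I would note once and for all the bookkeeping point that the based groups $\pi_1(K-C_i,r)$ and their bonds are defined using $r$ as a basepoint path, so that a homotopy ``rel $\{r\}$'' is exactly what lets the basepoint of a loop slide along $r$ toward $\infty$ as the loop is pushed into deeper complements.

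Next I would prove (1)$\Leftrightarrow$(2). Take $C=C_n$ and let $D=C_m$ be supplied by Mittag-Leffler. A loop $\alpha$ based on $r$ with image in $K-D$ represents a class in $\pi_1(K-C_m,r)$; its image in $\pi_1(K-C_n,r)$ lies in $\mathrm{im}(\pi_1(K-C_m,r)\to\pi_1(K-C_n,r))$, which by Mittag-Leffler is contained in $\mathrm{im}(\pi_1(K-C_k,r)\to\pi_1(K-C_n,r))$ for every $k$ (for $k\leq m$ by factoring the bond, for $k\geq m$ by the stabilized equality). Unwinding this containment geometrically says precisely that $\alpha$ is homotopic rel $\{r\}$, by a homotopy in $K-C_n$, to a loop in $K-C_k$ -- which is (2), after absorbing any $E$ into some $C_k$. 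Conversely, (2) for the pair $(C_n,D=C_m)$ yields $\mathrm{im}(\pi_1(K-C_m,r)\to\pi_1(K-C_n,r))\subseteq\mathrm{im}(\pi_1(K-C_k,r)\to\pi_1(K-C_n,r))$, and the reverse inclusion is automatic for $k\geq m$, so the images stabilize and Mittag-Leffler holds. The ``some (equivalently any) $r$'' clause is handled by observing that a proper homotopy between two proper rays (which semistability provides) induces a pro-isomorphism between the two inverse sequences carrying one loop-pushing statement to the other.

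Then I would prove (1)$\Leftrightarrow$(3), which I expect to be the main obstacle. The content is a dictionary between pushing loops out to $\infty$ and homotoping rays: given $C$, a proper homotopy rel $v$ in $K-C$ between two rays $r,s$ from $v$ with image in $K-D$ is assembled level by level, the restriction to $[0,N]$ filling a disk bounded by $r|_{[0,N]}$, an arc, and $s|_{[0,N]}^{-1}$. One direction builds such a homotopy by an inductive telescope: the surjectivity of the stabilized bonds coming from (2) lets one extend the filling one stage farther out at each step while remaining in $K-C$, and the genuine difficulty is to arrange that the successive stages escape every compact set, so that the resulting infinite concatenation is a \emph{proper} map, all while keeping the homotopy in the prescribed complement. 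The other direction reads Mittag-Leffler off the controlled ray homotopies, since a homotopy in $K-C$ moving $r$ to $s$ exhibits the relevant loops as pushable arbitrarily far out. Throughout, the base-ray and basepoint bookkeeping from the first paragraph is what keeps the two pictures compatible.

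Finally, under the extra hypothesis that $K$ is simply connected I would prove the equivalence with (4). For (1)$\Rightarrow$(4), any two proper rays $r,s$ based at $v$ are properly homotopic by semistability -- here one first reduces to a common basepoint by prepending a fixed path, a compact modification; the basepoint track $t\mapsto H(t,0)$ of such a homotopy $H$ is a loop at $v$, which is null-homotopic because $\pi_1(K)=1$, and splicing in this null-homotopy, a modification supported in a compact set and hence preserving properness, upgrades $H$ to a proper homotopy rel $\{v\}$. The converse (4)$\Rightarrow$(1) is immediate, since a rel-$\{v\}$ proper homotopy is in particular a proper homotopy and, $K$ being $1$-ended, all proper rays converge to the one end; the only subtlety is again the harmless reduction to a common basepoint.
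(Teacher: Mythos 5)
The paper itself offers no proof of Theorem \ref{ssequiv}: the statement is quoted from the literature, with the equivalences attributed to (Theorem 2.1, \cite{M1}) and (Lemma 9, \cite{M86}), and the pro-epimorphic characterization recorded just before it. So there is no in-paper argument to compare against; your plan follows the same standard route as those references, with the Mittag--Leffler condition as the hub. Your $(1)\Leftrightarrow(2)$ is correct (the image-stabilization unwinding, with the base-ray bookkeeping you note), your $(3)\Rightarrow(2)$ idea is sound (apply (3) to the tail of $r$ and to $\alpha$ followed by that tail), and your treatment of (4) via the null-homotopic basepoint track, spliced in over a compact set, is the standard and correct argument.

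The genuine gap is in $(2)\Rightarrow(3)$, precisely at the point you flag but do not resolve. As written, every stage of your telescope is only guaranteed to lie in $K-C$, because that is all statement (2) yields when applied with the single fixed pair $(C,D)$; an infinite concatenation of such stages need not be proper, since every stage may re-enter a fixed neighborhood of $C$. The missing idea is to iterate (2) with a growing base compact: set $D_0=D(C)$ and inductively $D_{n+1}=D(D_n)$, enlarging so that the sets $D_n$ also exhaust $K$. Using 1-endedness, choose the rungs of the ladder between $r$ and $s$ so that the piece handled at stage $n$ lies in $K-D_{n+1}$; then (2) applied with base compact $D_n$ pushes it out of any prescribed compact set by a homotopy in $K-D_n$. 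When these stages are stacked, any compact subset of $K$ is contained in some $D_m$ and therefore meets only the finitely many stages indexed below $m$ --- this is exactly what makes the assembled homotopy proper, and it still lies in $K-C$ because $C\subset D_0$. Note also that this construction produces a $D$ depending only on $C$, uniformly over all basepoints $v$ and all pairs of rays, which is what the quantifier order in (3) demands and which a ray-by-ray telescope, with choices depending on $r$ and $s$, would not automatically supply.
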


In a connected locally finite CW-complex $K$, every proper ray is properly homotopic to a proper edge path ray and every loop at a vertex of $K$ is homotopic (by a small homotopy) to an edge path loop. 

A subgroup $H$ of a group $G$ is {\it commensurated} in $G$ if for each $g\in G$, the subgroup $H\cap (g^{-1}Hg)$ has finite index in both $H$ and $g^{-1}Hg$. The following two results will be used in our proof of Theorem \ref{SSDecomp}.
There is a notion of semistability at $\infty$ for finitely generated groups that agrees with our definitions when the group is finitely presented. We mention this fact since it comes up in the following two results.

\begin{theorem}\label{MainCM} (\cite{CM2}) {\bf (MainCM)}
If a finitely generated group $G$ has an infinite, finitely generated, commensurated subgroup $Q$, and $Q$ has infinite index in $G$, then $G$ is 1-ended and semistable at $\infty$.
\end{theorem}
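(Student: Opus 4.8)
The plan is to establish the two conclusions separately: one-endedness by a Bass--Serre argument, and semistability by a ``sliding'' argument along the cosets of $Q$.

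\noindent\textbf{One-endedness.} First I would eliminate the other possibilities for the number of ends of $G$. Since $G$ contains the infinite group $Q$ it is infinite, so it does not have $0$ ends; and if $G$ had two ends it would be virtually $\mathbb Z$, forcing every infinite subgroup, in particular $Q$, to have finite index, contrary to hypothesis. The substantive case is infinitely many ends. Here Stallings' theorem gives a nontrivial minimal action of $G$ on a simplicial tree $T$ with finite edge stabilizers and a single orbit of edges, so the edge set of $T$ is $G/G_e$ with $G_e$ finite. I would first show $Q$ is not elliptic: if $Q$ fixed a vertex $v$, then for any hyperbolic $g\in G$ the group $Q\cap gQg^{-1}$ would fix both $v$ and $gv$, hence the geodesic between them, hence lie in an edge stabilizer and be finite; but commensuration makes it finite index in the infinite group $Q$, a contradiction. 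Since $Q$ is finitely generated with no global fixed point, Serre's criterion yields a hyperbolic $q\in Q$ with axis $L$. For each $g\in G$, commensuration gives some $n\ne 0$ with $gq^{n}g^{-1}\in Q$; this element is hyperbolic with axis $gL$, so the minimal $Q$-subtree $T_Q$ contains every translate $gL$, hence their convex hull, which is a nonempty $G$-invariant subtree and therefore all of $T$ by minimality of the $G$-action. Thus $Q$ acts minimally, hence cocompactly, on $T$, so $Q\backslash T$ is a finite graph; but since $G_e$ is finite and $[G:Q]=\infty$, the number of $Q$-orbits of edges $|Q\backslash G/G_e|$ is infinite, a contradiction. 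Hence $G$ is one-ended.

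\noindent\textbf{Semistability: reduction and setup.} For semistability I would use the finitely generated notion referred to in the excerpt and reduce, via the finitely generated analog of the criterion in Theorem \ref{ssequiv}(2), to the following pushing statement: for each compact set $C$ there is a larger compact $D$ so that every loop based on a fixed proper ray and lying outside $D$ can be homotoped arbitrarily far toward infinity by a homotopy avoiding $C$. Fix a finite generating set $S$ of $G$ containing a generating set of $Q$, let $\Gamma$ be the resulting Cayley graph, and observe that the left cosets $gQ$ partition the vertices into infinitely many isometric copies of the infinite graph of $Q$, each a source of proper rays to infinity.

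\noindent\textbf{Semistability: mechanism and main obstacle.} The key is a per-coset pushing construction that is then glued together. For each coset $gQ$, left multiplication by the finite-index subgroup $Q\cap gQg^{-1}$ consists of genuine isometries of $\Gamma$ that preserve $gQ$ and move its points toward infinity, giving a way to push proper rays out within that coset; this is the exact analog of how a normal subgroup acts, restricted to the finite-index piece of $Q$ that actually fixes the coset setwise. Commensuration is what guarantees these subgroups $Q\cap gQg^{-1}$ are uniformly large and compatibly related for neighbouring cosets $gQ$ and $sgQ$ ($s\in S$), so that a push carried out in one coset can be transferred to adjacent ones at uniformly bounded cost and the local pushes assemble into global proper homotopies. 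I would model the whole argument on the classical semistability proof for a group with an infinite, finitely generated, normal subgroup of infinite index, where exact normality makes each $gQ$ genuinely invariant. The main obstacle is precisely the passage from exact to coarse invariance: one must control the infinitely many varying finite-index subgroups $Q\cap gQg^{-1}$ \emph{uniformly}, so that the accumulated finite-index errors across cosets do not destroy properness and the homotopies still avoid the prescribed compact set, and one must do this in the finitely generated rather than finitely presented category, where only finitely many relations are available in any bounded region. This uniform control of the coarse coset-invariance is the heart of the proof.
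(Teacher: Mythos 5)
This statement is imported into the paper from \cite{CM2} without proof, so there is no in-paper argument to compare against; judged on its own terms, your proposal is half a proof. The one-endedness half is correct and essentially complete. Ruling out $0$ and $2$ ends is routine, and your Bass--Serre argument in the infinitely-ended case is sound: Stallings gives a nontrivial minimal action on a tree $T$ with finite edge stabilizers and one edge orbit; ellipticity of $Q$ fails because $Q\cap gQg^{-1}$ (finite index in the infinite group $Q$ by commensuration) would fix the geodesic from $v$ to $gv$ for a hyperbolic $g$ and hence lie in a finite edge stabilizer; then a hyperbolic $q\in Q$ exists, commensuration puts $gq^ng^{-1}\in Q$ with axis $gL$ for every $g$, so the minimal $Q$-subtree contains the convex hull of $\bigcup_g gL$, a $G$-invariant subtree, whence $T_Q=T$; finite generation of $Q$ makes the minimal action cocompact, contradicting $|Q\backslash G/G_e|=\infty$ when $G_e$ is finite and $[G:Q]=\infty$. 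Every step there is standard and correctly deployed.

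The semistability half, however, contains a genuine gap: it is a program, not an argument, and the step you yourself flag as ``the heart of the proof'' --- uniform control of the coarse invariance of cosets --- is exactly the step you never carry out. Concretely, the difficulty is this: transferring a push from the coset $gQ$ to an adjacent coset $sgQ$ costs a passage to the subgroup $Q\cap (sg)Q(sg)^{-1}$, and along a word $s_1\cdots s_n$ one is effectively working in an intersection of $n$ conjugates of $Q$, whose index in $Q$ is finite at each stage but admits no a priori uniform bound as $n\to\infty$; your appeal to the finitely many generators $s\in S$ controls one step, not the composition of arbitrarily many. Moreover, the analogy with the normal-subgroup case is weaker than you suggest: when $Q\trianglelefteq G$, the quotient $G/Q$ is a finitely generated group whose Cayley graph supplies a proper ``transverse'' direction along which the per-coset pushes are organized, while for a merely commensurated $Q$ the coset space $G/Q$ carries no group structure, and one must build the organizing proper rays and the properness of the assembled homotopies by hand --- in the finitely generated (not finitely presented) category, where one cannot even fix a simply connected complex in which to perform the homotopies. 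These are precisely the points where the cited proof does its real work, so as written your proposal establishes one-endedness but not semistability.
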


\begin{theorem}\label{MComb} (Theorem 3, \cite{M4}) {\bf (MComb)} 
Suppose  $A$ and $B$ are finitely generated 1 or 2-ended subgroups of the finitely generated group $G$ and both $A$ and $B$ are semistable at $\infty$.  If the set $A\cup B$ generates $G$ and the group $A\cap B$ contains a finitely generated infinite subgroup then $G$ has semistable fundamental group at $\infty$. 
\end{theorem}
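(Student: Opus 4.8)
The plan is to argue geometrically in the locally finite Cayley graph $\Gamma$ of $G$, using the coset ``sheets'' determined by $A$ and $B$ and verifying the loop/ray formulations of semistability in Theorem \ref{ssequiv} one end of $\Gamma$ at a time. First I would fix a finite generating set $S$ of $G$ containing finite generating sets $S_A$, $S_B$, $S_C$ of $A$, $B$, and of an infinite finitely generated subgroup $C\le A\cap B$. In $\Gamma$ each left coset $gA$ (resp. $gB$) spans a copy of the Cayley graph of $A$ (resp. $B$), which I call an $A$-sheet (resp. $B$-sheet); the $A$-sheet on $gA$ and the $B$-sheet on $gB$ overlap along the infinite coset $gC$. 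Since $G=\langle A\cup B\rangle$, every vertex of $\Gamma$ is reached from $1$ by an edge path whose consecutive subpaths lie alternately in $A$-sheets and $B$-sheets, so these sheets, glued along the infinite intersections $gC$, cover $\Gamma$. I would use the finitely generated notion of semistability (which agrees with the finitely presented one, as noted before Theorem \ref{MainCM}): because $A$ and $B$ are semistable, any proper homotopy required \emph{inside} a single sheet is available intrinsically, so I never need a globally locally finite $2$-complex---only the locally finite graph $\Gamma$ together with per-sheet homotopy data.

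The heart of the argument is a connectivity-at-infinity lemma asserting that the infinite intersections $gC$ furnish \emph{proper bridges} between adjacent sheets. A proper edge-path ray running to infinity inside $gC$ lies simultaneously in the $A$-sheet on $gA$ and the $B$-sheet on $gB$, so it can transport a proper homotopy from one sheet to the next without entering any prescribed compact set. I would first establish that each sheet is semistable ``as seen in $\Gamma$'': two proper rays that eventually lie in a common sheet and converge to the same end of $\Gamma$ are properly homotopic by a homotopy supported in a bounded neighborhood of that sheet, which follows from semistability of $A$ or $B$. The $2$-ended case is delicate, since a $2$-ended sheet has two ends and I must verify that the relevant bridge $gC$ heads into the correct end, so that the transported homotopy stays proper and avoids the target compact set.

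With sheets and bridges in place, I would verify formulation (2) of Theorem \ref{ssequiv}: for a given compact $C_0$, produce a larger compact $D$ such that every loop based on a fixed proper ray $r$ with image in $\Gamma-D$ is homotopic rel $\{r\}$, arbitrarily far out and while avoiding $C_0$, to loops in $\Gamma-E$ for any compact $E$. Such a loop decomposes into subpaths each lying in a single sheet and meeting its neighbors at vertices of some $gC$; I would push each subpath outward using the semistability of its own sheet and reconnect the pieces along the infinite bridges, which themselves can be driven to infinity because $C$ is infinite. Iterating this ``one sheet at a time'' push, indexed so that only finitely many sheets meet any metric ball, yields the homotopy---a Mittag-Leffler/tree-of-spaces argument in the spirit of Theorem \ref{MTComb}. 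The main obstacle I anticipate is \emph{properness together with the compact-avoidance constraint}: an individual per-sheet push is easy, but chaining infinitely many pushes and bridges requires a simultaneous induction over the sheets meeting successively larger balls so that the total homotopy is proper and never re-enters $C_0$, and the $2$-ended sheets, whose second end could route a homotopy back toward $C_0$, are the most delicate point to control.
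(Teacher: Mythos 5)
First, a point of orientation: the paper does not prove Theorem \ref{MComb} at all --- it is quoted from \cite{M4}. The closest in-paper proxy is the proof of Lemma \ref{SS2}, which the author explicitly says parallels the proof of Theorem 3 of \cite{M4}. That argument is a \emph{ray-stacking} argument, not a loop-pushing one: fix a bi-infinite geodesic line $\ell$ in the Cayley graph of the infinite finitely generated subgroup of $A\cap B$, yielding two opposite proper rays $r^+$ and $r^-$ based at every junction vertex; given an arbitrary proper edge path ray $s$ with vertices $w_0,w_1,\ldots$, choose $r_i\in\{w_ir^+, w_ir^-\}$ so that only finitely many $r_i$ meet any ball (possible because for each compact $D$ only finitely many $v$ have \emph{both} $vr^+$ and $vr^-$ meeting $D$), and then stack proper homotopies $K_i$ from $r_i$ to $(s_i,r_{i+1})$, each supported in a single $A$- or $B$-sheet, verifying formulations (3)/(4) of Theorem \ref{ssequiv} rather than your formulation (2).

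The genuine gap in your proposal is exactly the point you flag but do not resolve: the $2$-ended sheets. Your per-sheet outward push keeps the endpoints of a sheet-subpath on two bridges $g_1C$ and $g_2C$, but when the sheet is $2$-ended those bridges may exit \emph{opposite} ends of the sheet, and then no homotopy supported in a bounded neighborhood of that sheet can carry the subpath out of a compact set $E$ containing the sheet's separating ``waist'' --- any connecting path within the sheet must cross it, so the sheet-local push simply fails and one must reroute, which your scheme has no mechanism to do. The two-directional line $\ell$ together with the finiteness selection rule above is precisely the missing device: it lets one \emph{choose} the direction of escape at every junction simultaneously, which is also where the $1$-or-$2$-ended hypothesis on $A$ and $B$ does its work (in an infinite-ended sheet, two rays in the same sheet need not converge to the same end of the sheet, and sheet-semistability gives nothing). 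A secondary but real problem is your properness bookkeeping: ``only finitely many sheets meet any metric ball'' is true but insufficient, since a single sheet $w_i\Gamma_A$ can recur infinitely often along the data (the sets $J_i$ in the proof of Lemma \ref{SS2}), and one needs formulation (3) of Theorem \ref{ssequiv} to place those infinitely many homotopies within the one fixed sheet so that only finitely many meet each compact. So while your sheets-and-bridges setup identifies the right raw ingredients, the argument as proposed would fail at the $2$-ended routing step and leaves the global properness induction unsubstantiated.
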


The conditions of the following result are the most general we know in order to define ends of a space. In this article we are only interested in locally finite CW-complexes. 

\begin{theorem} \label{ES} {\bf(ES)} Suppose $X$ is connected, locally compact, locally connected
and Hausdorff and $C$ is compact in $X$,
then $C$ union all bounded components of $(X - C)$ is compact, and $X-C$ has only finitely many unbounded components.
Here bounded means compact closure.
\end{theorem}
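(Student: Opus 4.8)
The plan is to derive both assertions from two elementary facts — a disjointness-plus-compactness counting observation and a connectedness ``collar-crossing'' observation — and to prove the statements in the order: first the compactness of the filled-in set, then the finiteness of unbounded components. First I would dispose of the trivial cases ($X$ compact or $C=\emptyset$ leave nothing to prove), so I may assume $X$ is noncompact and $C\neq\emptyset$. Since $X$ is locally connected, $X-C$ is locally connected, so each of its connected components $W$ is open in $X$; being a component it is also closed in $X-C$, whence its frontier satisfies $\partial W:=\overline W\setminus W\subseteq C$. Moreover $\partial W\neq\emptyset$, for otherwise $W$ would be clopen and connectedness of $X$ would force $W=X$, impossible as $W\subseteq X-C\subsetneq X$. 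Using local compactness I fix an open $U$ with $C\subseteq U$ and $\overline U$ compact; then $\partial U=\overline U\setminus U$ is compact and, since $C\subseteq U$, it lies in $X-C$. I record two observations. (E) If $L\subseteq X-C$ is compact, then only finitely many components of $X-C$ meet $L$: the components meeting $L$ cover $L$ by pairwise disjoint open sets, so compactness yields a finite subcover, and disjointness forces the whole family to be finite. (F) If a component $W$ of $X-C$ meets $X-\overline U$, then $W$ meets $\partial U$: since $\emptyset\neq\partial W\subseteq C\subseteq U$, points of $\partial W$ are limit points of $W$ lying in the open set $U$, so $W$ meets $U$; a connected set meeting both of the disjoint open sets $U$ and $X-\overline U$ must meet $X\setminus\bigl(U\cup(X-\overline U)\bigr)=\overline U\setminus U=\partial U$.

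For the first assertion, call a component of $X-C$ \emph{bounded} if its closure is compact. Each bounded component $B$ either lies in $\overline U$ or meets $X-\overline U$; in the latter case (F) shows $B$ meets the compact set $\partial U$, so by (E) there are only finitely many such components, say $B_1,\dots,B_s$, each with compact closure. Consequently the set $Z$, defined as the union of $C$ with all bounded components, satisfies $Z\subseteq\overline U\cup\overline{B_1}\cup\cdots\cup\overline{B_s}$, a finite union of compact sets and hence compact. On the other hand the unbounded components are open, so their union is open and $Z$ is exactly its complement, hence closed; a closed subset of a compact set is compact, which gives the first assertion.

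For the second assertion I would exploit the compactness of $Z$ just established. Choose an open $U'$ with $Z\subseteq U'$ and $\overline{U'}$ compact; then $\partial U'$ is compact and contained in $X-C$ because $C\subseteq Z\subseteq U'$. Any unbounded component $W$ has non-compact closure, so $W\not\subseteq\overline{U'}$ and $W$ meets $X-\overline{U'}$; applying the analogue of (F) with $U'$ in place of $U$ (again using $\partial W\subseteq C\subseteq U'$) shows that $W$ meets $\partial U'$. By (E) only finitely many components of $X-C$ meet the compact set $\partial U'$, so there are only finitely many unbounded components, completing the proof.

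I expect the genuine obstacle to be the first assertion: one must see that the filled-in set $Z$ is compact even though there may be infinitely many bounded components, as already happens for a compact subset of $\mathbb{R}^2$ whose complement has infinitely many ``holes''. The point is that those infinitely many bounded components are harmless precisely because all but finitely many of them are trapped inside the chosen $\overline U$; the ones that escape $\overline U$ are forced by connectedness (observation (F)) to cross the compact collar $\partial U$, and by the counting observation (E) only finitely many components can do so. Once $Z$ is known to be compact, the finiteness of the unbounded components is essentially the same collar-crossing argument applied to a neighborhood of $Z$, so the conceptual work is concentrated entirely in the compactness step.
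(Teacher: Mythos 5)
Your proof is correct, but there is no internal proof to compare it against: the paper states Theorem \ref{ES} without proof, treating it as a standard fact needed to define ends (it only remarks that these hypotheses are the most general ones under which ends make sense). Checking your argument on its own merits, every step holds: Hausdorffness plus compactness of $C$ makes $X-C$ open, local connectedness then makes each component $W$ open, and since $W$ is also closed in $X-C$ its frontier is a nonempty subset of $C$ (nonempty by connectedness of $X$, as you note). Your observation (E) is the correct disjointness-plus-compactness count, and (F) is the correct connectedness argument forcing any component that escapes $\overline U$ to cross the compact collar $\partial U\subseteq X-C$; combining these, all but finitely many bounded components lie in $\overline U$, so the filled-in set $Z$ sits inside the compact set $\overline U\cup\overline{B_1}\cup\cdots\cup\overline{B_s}$, and $Z$ is closed because its complement is the (open) union of the unbounded components. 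One simplification worth noting: the detour through $Z$ and the second collar $U'$ in your final step is unnecessary. An unbounded component $W$ already satisfies $W\not\subseteq\overline U$, since $W\subseteq\overline U$ would make $\overline W$ a closed subset of the compact set $\overline U$ and hence compact; so (F) and (E) applied to the original collar $\partial U$ yield the finiteness of the unbounded components directly, before $Z$ is even mentioned. Your route via $Z$ is nonetheless valid, and your closing diagnosis is apt: the only genuinely delicate point is that infinitely many bounded components can exist (holes accumulating inside $\overline U$), and the collar-crossing count is exactly what renders them harmless.
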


If $X$ satisfies hypotheses of Theorem \ref{ES}, then the {\it number of ends} of $X$ is the largest number of unbounded components one can obtain in the compliment of a compact subset of $X$. If this number is unbounded, then $X$ has infinitely many ends. 
If $K$ is a locally finite, connected CW complex, then $K$ has 1-end if and only if any two proper rays in $K$ converge to the same end. Note that if two proper rays are properly homotopic, then they converge to the same end. 

\section {Near Ascending HNN-Extensions are 1-ended (NHH1E)}\label{NHH1E}
If $A$ is a group with finite generating set $\mathcal A$ then let $\Gamma(A,\mathcal A)$ be the Cayley graph of $A$ with respect to $\mathcal A$. This means that the vertex set of $\Gamma (A,\mathcal A)$ is $A$ and there is an edge (labeled $a$) between the vertex $v$ and $w$ if $va=w$ for some $a\in \mathcal A$.
The following result is somewhat standard. We include it for completeness.
\begin{theorem} \label{MMFI1} {\bf (MMFI1)} Suppose $H_0$ is an infinite finitely generated group, $H_1$ is a subgroup of finite index in $H_0$, $\phi:H_1\to H_0$ is a monomorphism and $G=H_0\ast_\phi$ is the resulting HNN extension. Then $G$ is 1-ended.
\end{theorem}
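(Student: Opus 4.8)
The plan is to verify one-endedness directly from the end criterion recorded after Theorem~\ref{ES}: a locally finite connected complex is $1$-ended precisely when any two proper rays converge to the same end, equivalently when the complement of every ball has a single unbounded component. I would work in the Cayley graph $\Gamma=\Gamma(G,\mathcal S)$ for $\mathcal S=\mathcal S_0\cup\{t,t^{-1}\}$ with $\mathcal S_0$ a finite generating set of $H_0$, together with the $G$-equivariant projection $p:\Gamma\to T$ onto the Bass--Serre tree $T$ of the HNN decomposition. Here the vertex space over $gH_0$ is an isometric copy of the Cayley graph of $H_0$, the edge space over an edge is a coset of $H_1$ (equivalently, after $\phi$, of $K:=\phi(H_1)$), and $\theta:G\to\mathbb Z$ sending $t\mapsto 1$ and $H_0\mapsto 0$ gives a height function on $T$, with each vertex having $[H_0:H_1]$ edges going up (the $t$-direction) and $[H_0:K]$ edges coming from below. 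Since $t$ has infinite order by Britton's Lemma, $G$ is infinite, so it remains only to rule out more than one end.

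The two structural inputs are as follows. First, because each edge group $H_1$ is \emph{infinite}, an edge space meets the complement of every ball; thus two adjacent vertex spaces can always be joined arbitrarily far out in $\Gamma$ by first reaching the shared edge space and then crossing it. Second, because $H_1$ has \emph{finite index} in $H_0$, the edge space of an up-edge is coarsely dense in the vertex space that is its origin: every vertex of that vertex space lies within a bounded distance (controlled by $[H_0:H_1]$ and $\mathcal S_0$) of the edge space. Consequently, from a point lying far out in a vertex space one may cross some up-edge by a bounded detour, and this lets one \emph{flow upward} in $T$ while staying outside any prescribed ball. Combining the two inputs, I would prove the key reconnection statement: for each ball $B_m$ there is a larger ball $B_{m'}$ so that any two vertices outside $B_{m'}$ can be routed upward and joined by a path outside $B_m$. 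Proper rays that stay over a bounded subtree (hence eventually escape to infinity inside a single vertex space, i.e.\ represent an end of $H_0$) and proper rays that escape up the tree are then all shown to converge to the same end.

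The main obstacle is that $H_0$ is only assumed finitely generated, not $1$-ended, so a single vertex space may itself have two or infinitely many ends, and two far-out points of one vertex space need not be joinable inside that vertex space while staying outside $B_m$. The crux of the argument is therefore to show that these several ends of a vertex space all become identified in $G$, which is exactly where the infinite, finite-index edge groups must be used: one reconnects the ends of a vertex space by crossing up-edges (where coarse density applies) rather than staying inside the space. A secondary subtlety is the asymmetry of the two inclusions --- the edge group has finite index in the origin vertex group but possibly infinite index in the terminal one --- so downward crossings in $T$ cannot be performed by bounded detours and must instead be replaced by upward routing to a common vertex. All of this is effectively the ascending case of \cite{HNN1} carried out one index-layer at a time; once the coarse-density and edge-infiniteness estimates are made quantitative, the required path control relative to $B_m$ follows by properness, giving a single unbounded component and hence $e(G)=1$.
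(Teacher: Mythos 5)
Your proposal is correct and follows essentially the same route as the paper: the paper's proof uses the level map $P:\Gamma\to\mathbb R$ (your height function), chooses $M$ so that far-out vertices lie in unbounded components of their cosets $v\Gamma_0-B(\ast,N)$, and then slides paths upward one level at a time using the coarse density of $H_1$ in $H_0$ (its Lemma~\ref{FIclose}) together with the conjugation relations --- exactly your ``flow upward via finite-index coarse density'' reconnection scheme. The only cosmetic difference is that the paper works directly with the level function rather than phrasing things through the Bass--Serre tree.
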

\begin{proof}
Choose a finite generating set $\mathcal H_0$ for $H_0$ where $\mathcal H_1\subset \mathcal H_0$ generates $H_1$. Then $\mathcal H_0$ along with $t$ (the stable letter) generates $G$. 
Consider: 
$$\Gamma_1=\Gamma(H_1,\mathcal H_1)\subset \Gamma_0=\Gamma(H_0,\mathcal H_0)\subset \Gamma=\Gamma(G,\{t\}\cup \mathcal H_0).$$
If $v$ (an element of $G$) is a vertex of $\Gamma$, then $v\Gamma_0$ and $v\Gamma_1$ are translates of $\Gamma_0$ and $\Gamma_1$ respectively. Note that if $w$ is a vertex of $v\Gamma_i$ ($i\in \{0,1\}$) then $v\Gamma_i=w\Gamma_i$. Our metric on $\Gamma$ is the edge path metric. If $v$ is a vertex of $\Gamma$ and $M>0$ an integer, then $B(v,M)$ is the ball of radius $M$ about $v$. Let $\ast$ be the identity vertex of $\Gamma$. There is a continuous map $P:\Gamma\to \mathbb R$ such that $P(\ast)=0$, each edge labeled $t$ is mapped to an interval $[n,n+1]$ for some $n\in \mathbb Z$ and each edge labeled by an element of $\mathcal H_0$ is mapped to an integer. This map extends the homomorphism $G\to \mathbb Z$ that kills the normal closure of $H_0$. 

We show that for any integer $N$ there is an integer $M>N$ such that any two vertices of $\Gamma-B(\ast, M)$ can be joined in $\Gamma-B(\ast,N)$. There are only finitely many $v\Gamma_0$ that intersect $B(\ast, N)$ non-trivially. Choose $M$ such that for each such $v\Gamma_0$, all bounded components of $v\Gamma_0-B(\ast,N)$ belong to $B(\ast,M)$ (see Theorem \ref{ES}). 
The next lemma follow easily from the fact that $A_1$ has finite index in $A_0$. 
\begin{lemma}\label{FIclose} {\bf (FIclose)}
Suppose $\mathcal A_1\subset \mathcal A_0$ are finite generating sets for the groups $A_1\subset A_0$ and that $A_1$ has finite index in $A_0$. Then there is an integer $N_{\ref{FIclose}}(A_0,\mathcal A_0,A_1, \mathcal A_1)$ such that if $v$ and $w$ are verteices of $\Gamma(A_0,\mathcal A_0)$  then there is an edge path in $\Gamma(A_0,\mathcal A_0)$ of length $\leq N_{\ref{FIclose}}(G_0,\mathcal A_0, G_1, \mathcal A_1)$ from $v$ to a vertex of $w\Gamma_1(A_1,\mathcal A_1)$.  
\end{lemma}

From this point on we let $N_{\ref{FIclose}}= N_{\ref{FIclose}} (H_0,\mathcal H_0, H_1,\mathcal H_1)$.
Suppose $e=(v,w)$ is an edge labeled $t$ and either $v$ or $w$ is in $B(\ast, M)$. Let $\alpha_v$ be an edge path at $v$ such that each edge of $\alpha$ is labeled by an element of $\mathcal H_1$ and such that the end point $a$ of $\alpha_v$ is in $\Gamma-B(\ast, M+1)$. Let $e_1=(a,b)$ be the edge at $a$ labeled $t$ and note that $e_1$ has image in $\Gamma-B(\ast, M)$. For each $h\in \mathcal H_1$, we have $t^{-1}ht=\phi(h)\in H_0$ and so there is an edge path $\beta_w$ in $w\Gamma_0$ from $w$ to $b$. Now suppose $x$ and $y$ are vertices of $\Gamma-B(\ast,M)$. We want to find a path in $\Gamma-B(\ast,N)$ connecting $x$ and $y$. Let $\tau$ be any edge path from $x$ to $y$ in $\Gamma$. If $e=(v,w)$ is an edge of $\tau$ labeled $t$, such that either $v$ or $w$ belongs to $B(\ast,M)$ then replace $e$ by the path $(\alpha_v, e_1, \beta_w^{-1})$ (above). In this way we produce an edge path $\tau_1$ from $x$ to $y$ such that each $t$ labeled edge of $\tau_1$ belongs to $\Gamma-B(\ast,M)$. Each maximal subpath of $\tau_1$ (none of whose edges is labeled $t$) has both end points in $\Gamma-B(\ast, M)$. Hence it suffices to show the end points of such paths can be joined by an edge path in $\Gamma-B(\ast,N)$. 

\begin{figure}
\vbox to 3in{\vspace {-2in} \hspace {.7in}
\hspace{-1 in}
\includegraphics[scale=1]{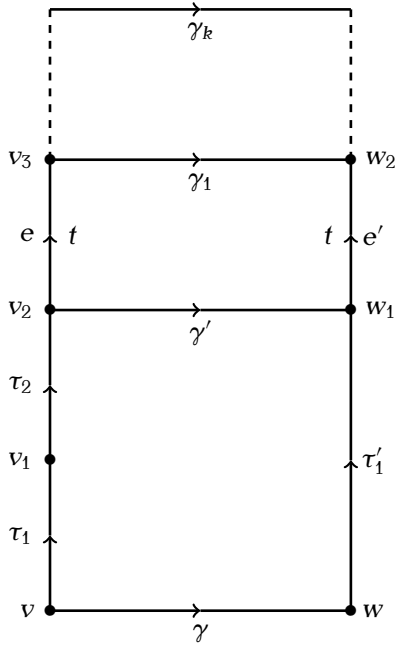}
\vss }
\vspace{-.1in}
\caption{One Endedness}
\label{Fig1}
\end{figure}

Recall that $P:\Gamma\to \mathbb R$ and $P(v\Gamma_0)=P(v)$. We have: 
$$P(B(\ast, N))=[-N,N].$$ 
Suppose $\gamma$ is an edge path with end points in $\Gamma-B(\ast,M)$, each edge of $\gamma$ is labeled by an element of $\mathcal H_0$ and $\gamma$ intersects $B(\ast,N)$, then $-N\leq P(\gamma)\leq N$. The strategy is to ``slide" $\gamma$ upward to a path $\gamma_k$ in level $N+1$ (which automatically avoids $B(\ast, N)$).  Assume the initial point of $\gamma$ is $v$ and the terminal point is $w$ (see Figure \ref{Fig1}).  By the definition of $M$,  the vertex $v$ is in an unbounded component of $v\Gamma_0-B(\ast,N)$. Hence there is an edge path $\tau_1$ in $v\Gamma_0-B(\ast,N)$ from $v$ to $v_1\in v\Gamma_0-B(\ast,M+N_{\ref{FIclose}}+1)$. Similarly choose $\tau'_1$ in $v\Gamma_0-B(\ast, N) $ from $w$ to $w_1\in v\Gamma_0-B(\ast, M+1)$. Note that: 
$$v\Gamma_0=v_1\Gamma_0=w\Gamma_0=w_1\Gamma_0.$$ 
 By the definition of $N_{\ref{FIclose}}$ we choose an edge path $\tau_2$ of length $\leq N_{\ref{FIclose}}$ in $w_1\Gamma_0-B(\ast, M+1)$ from $v_1$ to $v_2\in w_1\Gamma_1-B(\ast, M+1)$.  Let $e=(v_2,v_3)$ and $e'=(w_1,w_2)$ be edges labeled $t$. 
Let $\gamma'$ be an edge path in $w_1\Gamma_1$ from $v_2$ to $w_1$. The conjugation relations $t^{-1}ht=\phi(h)$ for $h\in \mathcal H_1$ (applied to each edge of $\gamma'$) give an edge path $\gamma_1$ in $v_3\Gamma_0$ from $v_3$ to $w_2$. 
Then:
 $$\{v_3,w_2\}\subset v_3\Gamma_0-B(\ast,M).$$
 Note that the paths $(\tau_1, \tau_2, e)$ (from $v$ to $v_3$) and $(\tau_1', e')$ (from $w$ to $w_2$) have image in $\Gamma-B(\ast, N)$. Hence we need only show there is an edge path in $\Gamma-B(\ast, N)$ from $v_3$ to $w_2$. Note that $P(\gamma_1)=P(\gamma)+1\geq -N+1$. Repeating this process at most $2N+1$ times produce a path from $v$ to $w$ in $\Gamma-B(\ast,N)$, and so $G$ is 1-ended. 
\end{proof}

\section{The Proof of Theorem \ref{MMFI} {\bf (PFMMFI)}} \label{PFMMFI}

This theorem generalizes both the Main Theorem and Theorem 3.1 of \cite{HNN1} (where $H_0=H_1$ and $G$ is an {\it ascending HNN-extension} of $H_0$).
Our proof of Theorem \ref{MMFI} has similarities to the one in \cite{HNN1}, but requires several new ideas. 

\begin{proof} Theorem \ref{MMFI1} implies that $G$ is 1-ended (even when $H_0$ is finitely generated and not finitely presented).  Again we consider the following finite presentation of $G$:
$$\mathcal P=\langle t, \mathcal H_0: \mathcal R, t^{-1}ht=\hat \phi(h)\hbox{ for all }h\in \mathcal H_1\rangle$$
where $\mathcal P_0=\langle \mathcal H_0:\mathcal R\rangle$ is a finite presentation for $H_0$. We assume  $\mathcal H_1\subset \mathcal H_0$ and $\mathcal P_1=\langle \mathcal H_1:\mathcal R_1\rangle$ (for some $\mathcal R_1\subset \mathcal R_0$)  is a finite presentation for $H_1$. Here $\hat \phi:F(\mathcal H_1)\to F(\mathcal H_0)$ is a homomorphism of free groups realizing $\phi$ and the relations $t^{-1}ht=\hat \phi(h)$ are called {\it conjugation relations}. Let 
$$q:F(\mathcal H_0\cup \{t\})\to G$$
be a quotient homomorphism with kernel equal to the normal closure of $\mathcal R$ union the set of conjugation relations.

\[
\xymatrix{
F(\mathcal H_1) \ar[r]^{\hat \phi} \ar[d]^q & F(\mathcal H_0) \ar[d]^q\\
H_1 \ar[r]^\phi & H_0 }
\]

Let  $\Gamma_1\subset \Gamma_0\subset \Gamma=\Gamma(\mathcal P)$ be the {\it Cayley 2-complexes} for $\mathcal P_1$, $\mathcal P_0$ and $\mathcal P$ respectively. The 1-skeleton of $\Gamma$ is the Cayley graph of $G$ with respect to the generating set $\mathcal H_0\cup \{t\}$. At each vertex there is a 2-cell attached according to each relation of $\mathcal P$. If the edges of an edge path in $\Gamma$ are labeled by $\mathcal S^{\pm1}$ for $\mathcal S\subset \mathcal H_0$, then we call it an $\mathcal S$ {\it path}. If $v(\in G)$ is a vertex of $\Gamma$ then $v\Gamma_1\subset v\Gamma_0\subset \Gamma$. Note that any edge path in $v\Gamma_0$ is an $\mathcal H_0$ path. There is a continuous map $p:\Gamma\to \mathbb R$ 
such that each vertex is mapped to an integer, a (directed) edge $e$ labeled $t$ is mapped linearly by $p$ to an interval $[n,n+1]$ (so that $p(e(0))=n$ and $p(e(1))=n+1$) and $p(v\Gamma_0)=\{p(v)\}$.  A conjugation 2-cell is mapped to an interval $[n,n+1]$. We say a vertex, edge or 2-cell is in {\it level} $n$ if $p$ maps it to $n$.  If $X$ is a 2-complex, let $X^{(1)}$ be the 1-skeleton of $X$, endowed with the edge path metric. Suppose $A$ is a subcomplex of $X$ and $A^{(1)}$ is the 1-skeleton of $A$. Let $B(A^{(1)},n)$ be the ball of radius $n$ in $X^{(1)}$ around $A^{(1)}$. For $n\geq 1$, let $St^n(A)$ be $B(A^{(1)},n)$ union all 2-cells whose boundaries belong to $B(A^{(1)},n)$. 
\medskip

\noindent ($\ast$) {\it Let $N_0$ be such that $|\hat \phi(h)|+3\leq N_0$ for all $h\in \mathcal H_1$. Then any (closed) 2-cell arising from a conjugation relation belongs to $St^{N_0}(v)$ for any vertex of that cell.} 

All homotopies of paths will be relative to end points and we simply say the paths are homotopic. 
Note that ($\ast$) implies:

\begin{lemma} \label{oldslide} {\bf (oldslide)}
Suppose $\alpha$ is an $\mathcal H_1$ path from $v$ to $w$ in $\Gamma$. Let $e$ and $d$ be the edges labeled $t$ at $v$ and $w$ respectively. Then the edge path $(e^{-1}, \alpha, d)$ is homotopic  to an $\mathcal H_0$ path $\beta$ by a homotopy $H$ with image in $St^{N_0}(im(\alpha))$. Furthermore $p(H)\subset [p(v),p(v+1)]
$. 
\end{lemma}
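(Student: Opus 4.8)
The plan is to prove the lemma one edge of $\alpha$ at a time, so that the whole homotopy becomes a concatenation of elementary slides, each pushed across a single conjugation $2$-cell. Write $\alpha=(d_1,\dots,d_k)$, where $d_i$ runs from $a_{i-1}$ to $a_i$ (with $a_0=v$ and $a_k=w$) and is labeled by some element of $\mathcal H_1^{\pm 1}$, say $h_i$. For $0\le i\le k$ let $f_i$ be the edge labeled $t$ at $a_i$, so that $f_0=e$ and $f_k=d$; each $f_i$ runs from $a_i$ (in level $p(v)$) up to $a_it$ (in level $p(v)+1$). The target path $\beta$ will live entirely in level $p(v)+1$ and run from $vt$ to $wt$, the common endpoints of $(e^{-1},\alpha,d)$.

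The elementary move handles a single edge $d_i$. The conjugation relation $t^{-1}h_it=\hat\phi(h_i)$ is carried by a $2$-cell whose boundary, read from $a_{i-1}t$, is $(f_{i-1}^{-1},d_i,f_i,\gamma_i^{-1})$, where $\gamma_i$ is the $\mathcal H_0$ path from $a_{i-1}t$ to $a_it$ labeled $\hat\phi(h_i)$ (when $d_i$ carries $h\in\mathcal H_1$ in the reverse orientation one uses the same cell traversed backwards, and $\gamma_i$ is labeled $\hat\phi(h)^{-1}$). Thus this single $2$-cell furnishes a homotopy, rel the endpoints $a_{i-1}t$ and $a_it$, from $(f_{i-1}^{-1},d_i,f_i)$ to the $\mathcal H_0$ path $\gamma_i$, which lies entirely in level $p(v)+1$.

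To assemble, I insert the trivial backtrack $(f_i,f_i^{-1})$ at each interior vertex, rewriting $(e^{-1},\alpha,d)$ as the concatenation $(f_0^{-1},d_1,f_1)(f_1^{-1},d_2,f_2)\cdots(f_{k-1}^{-1},d_k,f_k)$ up to an obvious homotopy supported on the edges $f_1,\dots,f_{k-1}$. Applying the elementary move to each factor replaces it by $\gamma_i$, and the result is the $\mathcal H_0$ path $\beta=(\gamma_1,\dots,\gamma_k)$ from $vt$ to $wt$ in level $p(v)+1$. The total homotopy $H$ is the union of the $k$ conjugation $2$-cells together with the backtrack strips along $f_1,\dots,f_{k-1}$.

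It remains to verify the two localization claims, and this is exactly where $(\ast)$ does the work. Each edge $f_i$ lies in $B(im(\alpha),1)$ since $a_i\in im(\alpha)$, and the $i$-th conjugation $2$-cell has $a_{i-1}$ (a vertex on $\alpha$) among its vertices, so by $(\ast)$ it lies in $St^{N_0}(a_{i-1})\subseteq St^{N_0}(im(\alpha))$; as $N_0\ge 3\ge 1$, the backtrack edges lie there too, giving $im(H)\subseteq St^{N_0}(im(\alpha))$. For the level claim, every conjugation $2$-cell and every $t$-edge $f_i$ maps under $p$ into $[p(v),p(v)+1]$, so $p(H)\subseteq[p(v),p(v)+1]$. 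I expect no genuine obstacle here: all the content is packaged into the conjugation relations and $(\ast)$, and the only care required is the bookkeeping of the orientations of the $d_i$ and the harmless backtracks (the displayed range in the statement should read $[p(v),p(v)+1]$).
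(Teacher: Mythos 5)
Your proof is correct and matches the paper's intended argument: the paper treats Lemma \ref{oldslide} as an immediate consequence of $(\ast)$ and the conjugation relations, and your edge-by-edge sliding across conjugation $2$-cells is exactly that argument spelled out, including the correct observation that the displayed range should read $[p(v),p(v)+1]$.
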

We apply the next lemma in both this section and in the proof of Lemma \ref{SS2}. 
\begin{lemma} \label{SSL1} {\bf (SSL1)}
Suppose $A_0$ and $A_1$ are finitely presented groups, and $A_1$ has finite index in $A_0$. Let $\mathcal Q=\langle \mathcal A_0:R\rangle$ be a finite presentation of $A_0$ such that $\mathcal A_1\subset \mathcal A_0$ generates $A_1$ (so that the Cayley graph $\Gamma(A_1,\mathcal A_1)$ is a subset of the Cayley 2-complex $\Gamma(\mathcal Q)$). 
There is an integer $N_{\ref{SSL1}}(\mathcal Q)$ such that if $\alpha$ is an edge path in $\Gamma(\mathcal Q)$ with $\alpha(0)=v$ and end point in $ v\Gamma(A_1,\mathcal A)$ then $\alpha$ is homotopic to an edge path in $v\Gamma(A_1,\mathcal A_1)$ by a homotopy $H$ in $\Gamma(\mathcal Q)$ with image in $St^{N_{\ref{SSL1}}(\mathcal Q)}(im(\alpha))$. 
\end{lemma}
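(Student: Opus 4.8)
The plan is to homotope $\alpha$ onto the coset translate $v\Gamma(A_1,\mathcal A_1)$ one edge at a time, filling the resulting bounded-length loops using simple connectivity of the Cayley $2$-complex. Write $\alpha=(e_1,\dots,e_n)$ with consecutive vertices $v=u_0,u_1,\dots,u_n$, noting that the vertex set of $v\Gamma(A_1,\mathcal A_1)$ is exactly the coset $vA_1$, so the hypothesis says $u_n\in vA_1$ (and $u_0=v\in vA_1$). By Lemma \ref{FIclose} applied to $A_1\subset A_0$, there is a constant $N_{\ref{FIclose}}$ so that for each $i$ I may choose an edge path $\delta_i$ in $\Gamma(\mathcal Q)$ of length $\le N_{\ref{FIclose}}$ from $u_i$ to some vertex $u_i'\in vA_1$; I take $\delta_0$ and $\delta_n$ constant since $u_0$ and $u_n$ already lie in $vA_1$.

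Next I produce short connecting paths inside the coset. Consecutive projected vertices satisfy $d_{\Gamma(A_0)}(u_{i-1}',u_i')\le 2N_{\ref{FIclose}}+1$, so the difference $(u_{i-1}')^{-1}u_i'$ lies in the finite set of elements of $A_1$ having $\mathcal A_0$-length at most $2N_{\ref{FIclose}}+1$. Since $\mathcal A_1$ generates $A_1$, each such element has bounded $\mathcal A_1$-length, say at most a constant $K=K(\mathcal Q)$; this is precisely the undistortion of the finite-index subgroup. Hence there is an edge path $\sigma_i$ of length $\le K$ in $v\Gamma(A_1,\mathcal A_1)$ from $u_{i-1}'$ to $u_i'$. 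Setting $\beta=(\sigma_1,\dots,\sigma_n)$ gives a path in $v\Gamma(A_1,\mathcal A_1)$ from $v$ to $u_n$, which will be the target of the homotopy.

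For each $i$ the concatenation $L_i=(e_i,\delta_i,\sigma_i^{-1},\delta_{i-1}^{-1})$ is an edge loop based at $u_{i-1}$ of length at most $\ell:=1+2N_{\ref{FIclose}}+K$, a quantity depending only on $\mathcal Q$. The crucial input is a uniform filling bound: the Cayley $2$-complex $\Gamma(\mathcal Q)$ is simply connected and $A_0$ acts freely and cocompactly on it, so up to the action there are only finitely many edge loops of length $\le \ell$; each bounds a singular disk, and translating these finitely many disks yields a constant $N'=N'(\mathcal Q)$ with the property that every loop of length $\le\ell$ is null-homotopic by a homotopy whose image lies in $St^{N'}$ of the loop. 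Applying this to each $L_i$ gives a homotopy rel endpoints from $e_i$ to $(\delta_{i-1},\sigma_i,\delta_i^{-1})$. Concatenating these across $i$ and telescoping the interior $\delta_i$'s (each $\delta_i$ occurs with opposite orientation in $L_i$ and $L_{i+1}$, and $\delta_0,\delta_n$ are trivial) produces a homotopy $H$ from $\alpha$ to $\beta$.

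Finally I bound the image. Each $\delta_i$ lies within $N_{\ref{FIclose}}$ of $u_i\in\mathrm{im}(\alpha)$, and each $\sigma_i$ within $N_{\ref{FIclose}}+K$ of $\mathrm{im}(\alpha)$, so $\mathrm{im}(L_i)\subset B(\mathrm{im}(\alpha),N_{\ref{FIclose}}+K)$; then the filling of $L_i$ lies in $St^{N'}$ of $L_i$, and the telescoping cancellations do not enlarge the image. Taking $N_{\ref{SSL1}}(\mathcal Q)=N_{\ref{FIclose}}+K+N'+1$ then gives $\mathrm{im}(H)\subset St^{N_{\ref{SSL1}}(\mathcal Q)}(\mathrm{im}(\alpha))$, completing the proof. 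I expect the main obstacle to be the uniform disk-filling constant $N'$: one must argue that \emph{every} loop of length $\le\ell$ fills inside a bounded star-neighborhood, and this is exactly where simple connectivity together with the cocompact free action is used, reducing the problem to the finitely many loop types based at a single vertex.
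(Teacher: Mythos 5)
Your proposal is correct and follows essentially the same route as the paper's proof: project each vertex of $\alpha$ into $v\Gamma(A_1,\mathcal A_1)$ via Lemma \ref{FIclose}, connect consecutive projections by uniformly short $\mathcal A_1$-paths, and fill the resulting boundedly long quadrilateral loops inside a uniform star neighborhood using the cocompact action on the simply connected Cayley $2$-complex. The only cosmetic difference is that you spell out the finiteness arguments behind the constants (your $K$ is the paper's $M$, your $N'$ its filling constant), which the paper simply asserts.
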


\begin{proof} 
Say $\alpha=(e_1,\ldots, e_n)$ with vertices $v=v_0,\ldots, v_n$. If $i\in \{1,\ldots, n-1\}$ then there is an edge path $\phi_i$ of length $\leq \bar N_{\ref{FIclose}}=N_{\ref{FIclose}} (A_0,\mathcal A_0,A_1,\mathcal A_1)$ from $v_i$ to $w_i\in v\Gamma(A_1,\mathcal A_1)$. Define $\phi_0$ and $\phi_n$ to be trivial paths and define 
$w_0=v_0$ and $w_n=v_n$. Choose an integer $M$ such that if two vertices of $\Gamma(A_1,\mathcal A_1)$ are within $2\bar N_{\ref{FIclose}}+1$ in $\Gamma(\mathcal Q)$ then there is an edge path in $\Gamma(A_1,\mathcal A_1)$ of length $\leq M$ connecting these vertices. Choose $N_{\ref{SSL1}}(\mathcal Q)$ so that any edge path loop in $\Gamma(\mathcal Q)$ of length $\leq 2\bar N_{\ref{FIclose}}+M+1$ is homotopically trivial in $St^{N_{\ref{SSL1}}(\mathcal Q)}(a)\subset \Gamma(\mathcal Q)$ for any vertex $a$ of the loop.  For $i \in \{1,\ldots, n\}$  let $\tau_i$ be an $\mathcal H_1$ edge path (in $v\Gamma(A_1,\mathcal A_1)$) of length $\leq M$ from $w_{i-1}$ to $w_{i}$. Each loop $(\phi_i, \tau_{i+1}, \phi_{i+1}^{-1}, e_{i+1}^{-1})$ is homotopically trivial in $St^{N_{\ref{SSL1}}(\mathcal Q)}(v_i)\subset \Gamma(\mathcal Q)$. This implies that $\alpha$ is homotopic to $(\tau_1,\ldots, \tau_{n})$, by a homotopy in $St^{N_{\ref{SSL1}(\mathcal Q)}}(im(\alpha))\subset \Gamma(\mathcal Q)$.
\end{proof}

Next we recall some basic group theory for HNN-extensions. Suppose $w\in F(\mathcal H_0)$ and $q(w)=1\in G$. Then the number of letters of $w$ labeled $t$ is the same as the number of letters of $w$ labeled $t^{-1}$. If some letter of $w$ is labeled $t$, then (cyclically), there is a subword $t^{-1} w' t$ of $w$ such that $q(w')\in H_1$.  Suppose $\alpha$ is an edge path in $\Gamma$ with consecutive vertices $v_0,\ldots, v_n$. If the edge $(v_{i-1}, v_i)$ is labeled $a_i$ and is directed from $v_i$ to $v_{i+1}$ then let $b_i=a_i$. If  this  edge is directed $(v_{i},v_{i-1})$ then let $b_i=a^{-1}$. We say the word $b_1\cdots b_n$ ($\in F(\mathcal H_0\cup \{t\}$) {\it corresponds} to $\alpha$ and we write $\bar \alpha=b_1\cdots b_n$. 

An edge path has {\it backtracking} if there are adjacent edges $e$ and $e^{-1}$. 
The next lemma effectively allows us to move lowest level maximal subpaths of certain loops upward. Lemmas \ref{smallmove} and \ref{slideupward} are used in the simply connected at $\infty$ part of our result. The no backtracking hypothesis is not an issue for simply connected at $\infty$ arguments, but when a base ray must be respected in the semistability part of our theorem, we cannot assume there is (cyclically) no backtracking in certain loops. 

\begin{lemma} \label{smallmove} {\bf (smallmove}) 
Suppose $\gamma$ is an edge path loop in $\Gamma$ such that:  at least one edge of $\gamma$ is labeled $t$; $p( \gamma)=[A,B]$; and  (cyclically) $\gamma$ has no backtracking. Assume $\alpha$ is (cyclically) a maximal subpath of $\gamma$ such that $p(\alpha)=A$. Let $e$ and $d$ be the edges of $\gamma$ that (cyclically) immediately precede and follow $\alpha$, so that $\beta=(e,\alpha, d)$ is (cyclically) a subpath of $\gamma$. Then $q\bar \alpha\in H_1$, $\bar \beta=t^{-1}\bar \alpha t$ and $\beta$ is homotopic to an $\mathcal H_0$ path $\tau$ by a homotopy $H$ with image in $St^{N_0+N_{\ref{SSL1}}}(im(\alpha))$ and  with $p(H)\subset [ A, A+1]$. 
\end{lemma}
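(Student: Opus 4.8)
The plan is to handle the three assertions in turn, noting that the identity $\bar\beta=t^{-1}\bar\alpha t$ and the homotopy statement are both short once the membership $q\bar\alpha\in H_1$ is in hand, so the real content sits in that membership. I would begin with $\bar\beta=t^{-1}\bar\alpha t$, which is pure level bookkeeping. Since $\alpha$ is a maximal subpath on which $p\equiv A$ and $A$ is the global minimum of $p$ on $\gamma$, the flanking edges $e$ and $d$ must each leave level $A$; as only $t$-labeled edges change level, both are $t$-edges, and since neither can descend below the minimum they must rise to level $A+1$. Reading orientations, the edge $e$ entering the initial vertex of $\alpha$ from level $A+1$ is a $t$-edge traversed downward, so $\bar e=t^{-1}$, while $d$ leaves the terminal vertex of $\alpha$ upward, so $\bar d=t$. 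As $\alpha$ is an $\mathcal H_0$ path, $\bar\alpha\in F(\mathcal H_0)$ and hence $\bar\beta=\bar e\,\bar\alpha\,\bar d=t^{-1}\bar\alpha t$.

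For the assertion $q\bar\alpha\in H_1$ I would work through the HNN normal-form structure recalled above. Since $q\bar\gamma=1$ and $\gamma$ carries a $t$-edge, cyclically $\bar\gamma$ must contain a pinch, and the aim is to show that a pinch occurs at the minimal level, where every maximal flat subpath is flanked as $t^{-1}(\,\cdot\,)t$. Reformulating in the Bass--Serre tree, the edges $e$ and $d$ cross the tree-edges determined by the cosets $xH_1$ and $yH_1$, where $y=x\cdot q\bar\alpha$, and these agree exactly when $q\bar\alpha\in H_1$; so it suffices to prove that at a global minimum the descending edge $e$ and the ascending edge $d$ are paired across a single tree-edge. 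I expect this to be the main obstacle. When $H_1=H_0$ the conclusion is free, since then $\bar\alpha$ automatically represents an element of $H_0=H_1$; but for proper $H_1$ the two flanking $t$-edges at a flat subpath need not cross the same tree-edge a priori, so it is the global minimality of the level---rather than the mere absence of backtracking---that has to be exploited to force the pairing. This is precisely the point at which the argument must go beyond the ascending case of \cite{HNN1}.

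Granting $q\bar\alpha\in H_1$, the homotopy in the third assertion follows by combining two earlier lemmas. Because $q\bar\alpha\in H_1$, the terminal vertex of $\alpha$ lies in the coset $x\Gamma_1$, so Lemma \ref{SSL1}, applied inside the copy of $\Gamma(\mathcal P_0)$ that is $x\Gamma_0$ (with $N_{\ref{SSL1}}=N_{\ref{SSL1}}(\mathcal P_0)$), homotopes $\alpha$ rel endpoints to an $\mathcal H_1$ path $\alpha'$ by a homotopy supported in $St^{N_{\ref{SSL1}}}(im(\alpha))$; as this homotopy stays inside the single level-$A$ coset $x\Gamma_0$, its $p$-image is $\{A\}$. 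Now $e$ and $d$ are exactly the downward and upward $t$-edges at the endpoints of the $\mathcal H_1$ path $\alpha'$, so Lemma \ref{oldslide} homotopes $\beta=(e,\alpha',d)$ to an $\mathcal H_0$ path $\tau$ by a homotopy supported in $St^{N_0}(im(\alpha'))$ with $p$-image in $[A,A+1]$. Composing the two and using $im(\alpha')\subset St^{N_{\ref{SSL1}}}(im(\alpha))$, the path $\beta$ is homotopic to $\tau$ by a homotopy $H$ with total image in $St^{N_0+N_{\ref{SSL1}}}(im(\alpha))$ and $p(H)\subset[A,A+1]$, as required.
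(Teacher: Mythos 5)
Your level bookkeeping for $\bar\beta=t^{-1}\bar\alpha t$, and your assembly of the final homotopy by first applying Lemma \ref{SSL1} inside the level-$A$ coset and then Lemma \ref{oldslide} to slide the resulting $\mathcal H_1$ path up one level, coincide with what the paper does for those two parts and are fine. The problem is the middle assertion: you never prove $q\bar\alpha\in H_1$. You reformulate it (correctly) as the statement that the descending edge $e$ and the ascending edge $d$ project to the same edge of the Bass--Serre tree, and then state that global minimality of the level must force this pairing, calling it the main obstacle. Since that membership is the entire content of the lemma --- everything else is routine once it is known --- the proposal has a genuine gap.

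Moreover, the pairing you hope to extract does not follow from the hypotheses as stated: a closed path in a tree can enter and leave a lowest vertex through two different edges, and this really happens here. Take $H_0=\langle x\rangle\cong\mathbb Z$, $H_1=\langle x^2\rangle$, $\phi(x^2)=x$, so $G=\langle x,t\,:\,t^{-1}x^2t=x\rangle$ and $txt^{-1}=x^2$; use any presentation $\mathcal P$ as in the paper whose generating set contains $x$. The word $t^{-1}x\,t\,x\,t^{-1}x^{-1}\,t\,x^{-1}$ equals $t^{-1}x(txt^{-1})x^{-1}tx^{-1}=t^{-1}x^2tx^{-1}=1$, so it labels an edge path loop $\gamma$ in $\Gamma$ with at least one $t$-edge, no backtracking (even cyclically), and $p(\gamma)=[A,A+1]$. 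Its two maximal bottom-level subpaths are single edges labelled $x$ and $x^{-1}$, and $x\notin H_1$; in fact $q(t^{-1}xt)\notin H_0$, so the corresponding $\beta=(e,\alpha,d)$ cannot be homotoped rel endpoints to any $\mathcal H_0$ path at all. So the membership cannot be deduced from ``loop, no backtracking, $\alpha$ at the minimum level'' alone, which is exactly where your sketch stalls. For comparison, the paper argues by a different route: it invokes the remark preceding the lemma (for a trivial word containing $t$-letters there is, cyclically, a pinch $t^{-1}w't$ with $q(w')\in H_1$) and then repeatedly replaces such pinches by $\mathcal H_0$-words, noting that the $t$-letters of $e$ and $d$ survive, until the pinch sits at $\alpha$ itself. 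You should be aware that the example above also probes that remark (its only Britton pinches are of the form $tw't^{-1}$ with $q(w')\in\phi(H_1)$), so if you want to complete your argument you should not aim at the lemma in this generality, but rather at the particular loops to which it is applied, or build in the symmetric option of sliding top-level subpaths down.
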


\begin{proof}
The maximality of $\alpha$ implies $\bar\beta=t^{-1}\bar\alpha t$.
We next argue that $q(\bar \alpha)\in H_1$. By the remarks preceding this lemma, there is (cyclically) a subword of $\bar\gamma$ of the form $t^{-1}\bar \beta t$ where  $q(\bar \beta)\in H_1$. If $\bar \beta=\bar \alpha$ then indeed $q(\bar \alpha)\in H_1$. Otherwise, $\bar \alpha$ and $\bar \beta$ do not overlap. Furthermore (since $\gamma$ has no backtracking) $e$ and $d$ are not adjacent to $\beta$ (see Figure \ref{Fig2}).
If $\bar \beta\ne \bar \alpha$ then replace $t^{-1}\bar \beta t$ in $\bar \gamma$ by an $\mathcal H_0$-word. This produces a word with two fewer $t$ letters, but the $t$ letters corresponding to $d$ and $e$ remain.  Continuing, we eventually conclude that $q(\bar \alpha)\in H_1$. Let $v$ be the initial vertex of $\alpha$, then the end point of $\alpha$ is in $v\Gamma_1$. 

\begin{figure}
\vbox to 3in{\vspace {-2in} \hspace {-.6in}
\hspace{-1 in}
\includegraphics[scale=1]{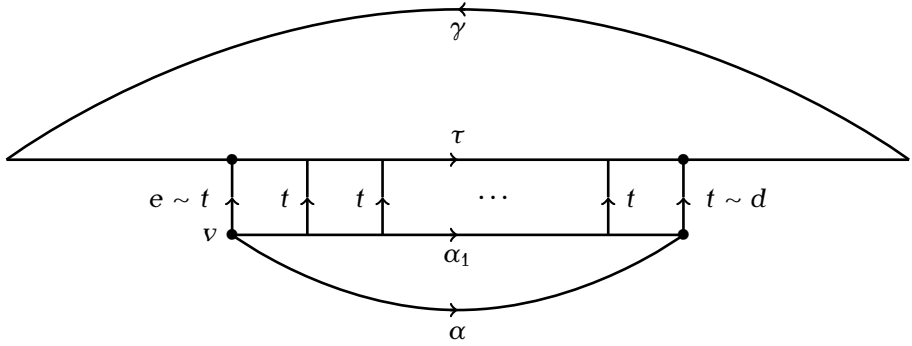}
\vss }
\vspace{-1.5in}
\caption{Sliding $\alpha$  upward to $\tau$}
\label{Fig2}
\end{figure}

By Lemma \ref{SSL1}, $\alpha$ is homotopic to an edge path $\alpha_1$ in $v\Gamma_1$ by a homotopy $\tilde H$ (in $v\Gamma_0$) with image in $St^{N_{\ref{SSL1}}}(im(\alpha))$. In particular, the image of $p\tilde H$ is $p(v)=A$.
Now we can slide $\alpha_1$ up one level (between $t$ labeled edges) to a path $\tau$, by a homotopy $\hat H$ which only uses 2-cells arising from conjugation relations on the letters of $\alpha_1$ (see Figure \ref{Fig2}). Note that the image of this homotopy lies between levels $A$ and $A+1$ and $\hat H$ has image in $St^{N_0}(im(\alpha_1))$. Combine $\tilde H$ and $\hat H$ to form the homotopy $H$ and finish the proof. 
\end{proof}

\begin{lemma}\label{slideupward} {\bf (slideupward)} 
Suppose $C$ is a compact subcomplex of $\Gamma$, $p(C)\subset [A,B]$ and $\gamma$ is an edge path loop (without backtracking)  in $\Gamma-C$. Then either $\gamma$ is homotopically trivial in $\Gamma-C$ or there is a homotopy 
in $\Gamma-C$ of $\gamma$ to an edge path loop $\gamma_1$ such that $p(\gamma_1)$ has image in $[A-1,\infty)$, the homotopy fixes the edges of $\gamma$ in levels $A-1$ and above and the only edges of $\gamma_1$ above level $A-1$ (including $t$-labeled edges connecting levels $A-1$ and $A$) are edges of $\gamma$. 
\end{lemma}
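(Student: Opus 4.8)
The plan is to push the part of $\gamma$ lying below level $A-1$ upward, one level at a time, using Lemma \ref{smallmove}, until nothing remains below level $A-1$ (or $\gamma$ collapses). Since $p(C)\subset[A,B]$, any subcomplex lying in levels strictly below $A$ is automatically disjoint from $C$; thus every homotopy I perform at levels $\le A-1$ lands in $\Gamma-C$ for free, and I never again reason about $C$ directly. As a measure of progress I take $n(\gamma)$, the number of $t$-edges of $\gamma$ whose upper endpoint lies at level $\le A-1$ (equivalently those joining a level $\le A-2$ to the level just above), and I induct on $n(\gamma)$.

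First the terminal cases, which give the base of the induction. If $\gamma$ has no $t$-edge it lies in a single coset $v\Gamma_0$, hence at a single level $\ell$; if $\ell\ge A-1$ we are done with $\gamma_1=\gamma$, while if $\ell<A-1$ then $v\Gamma_0\cong\Gamma_0$ is simply connected (being a Cayley $2$-complex) and lies below level $A$, so $\gamma$ bounds a disc in $v\Gamma_0\subset\Gamma-C$ and the first alternative holds. More generally, if $n(\gamma)=0$ then reaching any level $\le A-2$ would force $\gamma$ to descend across a $t$-edge counted by $n$; so $\gamma$ either lies entirely in levels $\ge A-1$ (second alternative, $\gamma_1=\gamma$) or lies in a single low coset (first alternative, as above).

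For the inductive step let $L$ be the lowest level met by $\gamma$, so $L\le A-2$, and let $\alpha_1,\dots,\alpha_k$ be the cyclic maximal subpaths of $\gamma$ at level $L$, each flanked by $t$-edges $e_i,d_i$ of $\gamma$. Because $\gamma$ has no backtracking, Lemma \ref{smallmove} applies to each choice $\alpha=\alpha_i$ and yields $q(\bar\alpha_i)\in H_1$ together with a homotopy, supported in levels $[L,L+1]$, replacing $\beta_i=(e_i,\alpha_i,d_i)$ by an $\mathcal H_0$-path $\tau_i$ at level $L+1$. Since the $\beta_i$ are disjoint subarcs of $\gamma$, I perform these $k$ slides in succession, each fixing the rest of the loop; crucially, only the facts $q(\bar\alpha_i)\in H_1$ used the no-backtracking hypothesis, whereas the slides themselves (Lemmas \ref{SSL1} and \ref{oldslide}) do not. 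The resulting loop $\gamma'$ lies in levels $\ge L+1$, the combined homotopy has image in levels $[L,L+1]\subset(-\infty,A)$ and hence in $\Gamma-C$, and it deletes only the low edges $\alpha_i,e_i,d_i$ while creating only the $\mathcal H_0$-edges of $\tau_i$ at level $L+1\le A-1$. In particular every original edge in levels $\ge A-1$ is left fixed, no new edge is created above level $A-1$, and $n(\gamma')=n(\gamma)-2k<n(\gamma)$.

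To recurse I must restore the no-backtracking hypothesis on $\gamma'$, and this is the main obstacle. The claim I must verify is that free reduction of $\gamma'$ propagates only \emph{downward}: the only new adjacencies sit where a $\tau_i$-edge (level $L+1$) meets a neighbour, a cancellation can occur only between edges in a common level-pair, so cancellation proceeds through $\mathcal H_0$-edges at level $L+1$ and, once a $\tau_i$ is exhausted, only through $t$-edges descending to levels $\le L$. Consequently, \emph{provided} $L+1\le A-2$, the reduction never disturbs an edge at level $A-1$ or above; so in that case I reduce $\gamma'$ to a backtracking-free loop (still in $\Gamma-C$, still fixing the high edges, with $n$ not increased) and apply the inductive hypothesis. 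When instead $L+1=A-1$ the loop already lies in $[A-1,\infty)$ and no further slide, hence no reduction, is needed, so I stop and take $\gamma_1=\gamma'$; this is exactly what protects the original level-$(A-1)$ edges, which a reduction at the top level could otherwise cancel against some $\tau_i$. The delicate bookkeeping is thus the verification that reduction cascades strictly downward, together with the decision never to reduce at the final level, so that the two conclusions, that the homotopy fixes all edges in levels $\ge A-1$ and that $\gamma_1$ introduces no edge above level $A-1$, hold at once.
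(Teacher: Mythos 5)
Your argument is essentially the paper's own proof: repeatedly apply Lemma \ref{smallmove} to the (cyclically) maximal subpaths of $\gamma$ in the lowest level, noting that each such homotopy has $p$-image in $(-\infty,A-1]$ and therefore misses $C$, with the terminal cases being a loop in a single level at or above $A-1$ (done) or in a single level below $A-1$ (homotopically trivial there). The only difference is your more explicit bookkeeping of the backtracking elimination between slides (induction on low $t$-edges and the downward-cascade discussion), which elaborates a step the paper performs with the brief instruction ``replace, eliminate backtracking and continue'' but does not change the method.
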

\begin{proof} 
Let $\bar\gamma \ (\in F(\mathcal H_0\cup \{t\}))$ be the (reduced) word labeling $\gamma$. If $\gamma$ has image in a single level, then either $\gamma$ lies in a level at or above level $A-1$ (and we are finished) or $\gamma$ lies in a level below level $A-1$. In the later case, $\gamma$ is homotopically trivial in that level (by a homotopy avoiding $C$).  Instead, we assume some letter of $\bar \gamma$ is $t$. 

Suppose $\alpha$ is (cyclically) a maximal subpath of $\gamma$ in the lowest level of $\Gamma$ traversed by $\gamma$. If $p(\alpha)\geq A-1$ we are finished. If $e$ and $d$ are the edges preceding and following $\alpha$, then Lemma \ref{smallmove} implies that $(e,\alpha,d)$ is homotopic to an $\mathcal H_0$ path $\tau$ (with $p(\tau)=p(\alpha)+1$) by a homotopy $H$ with $p(H)\subset (-\infty,A-1]$. In particular, $H$ avoids $C$. Replace $(e,\alpha,d)$ in $\gamma$ by $\tau$, eliminate backtracking and continue applying Lemma \ref{smallmove} to (cyclically) maximal subpaths in a lowest level until all edges of the resulting path are either in a single level below level $A-1$ (in which case $\gamma$ is homotopically trivial in $\Gamma-C$) or in levels $A-1$ and above.
\end{proof}

Now assume that $H_0$ is 1-ended. In this case, our goal is to show that $G$ is simply connected at $\infty$. Let $C$ be a finite subcomplex of $\Gamma$ and assume that $p(C)\subset [A, B]$.  Let $D$ be a finite subcomplex of $\Gamma$ containing  $St^{(B-A+2)(N_{\ref{SSL1}} +N_0)}(C)$. Also assume that if $g\in G$  then $g\Gamma_0-D$ is a single unbounded component (since $H_0$ is one ended). 
Let $\gamma$ be an edge path loop in $\Gamma-D$. It suffices to show that $\gamma$ is homotopically trivial in $\Gamma-C$.   The idea is to move $\gamma$ to a loop in a single level above level $B$ by a homotopy that avoids $C$. Once there, it is homotopically trivial in that level and hence that homotopy avoids $C$. Suppose $p(\gamma)=[A_1,B_1]$.

Inductively we show that for $0\leq k\leq B-A+2$, the loop $\gamma$ is either homotopically trivial in $\Gamma-C$ (as desired) or homotopic to an edge path loop $\mu_k$ by a homotopy in $\Gamma-C$ such that $\mu_k$ has image in $\Gamma-St^{(B-A+2-k)(N_{\ref{SSL1}} +N_0)}(C)$ and $p(\mu_k)\subset [A-1+k,\infty)$. Note that for $k=B-A+2$, we have $p(\mu_k)\subset [B+1,\infty)$ (so $\mu_k$ is above $C$). 
\begin{figure}
\vbox to 3in{\vspace {-1.5in} \hspace {.1in}
\hspace{-1 in}
\includegraphics[scale=1]{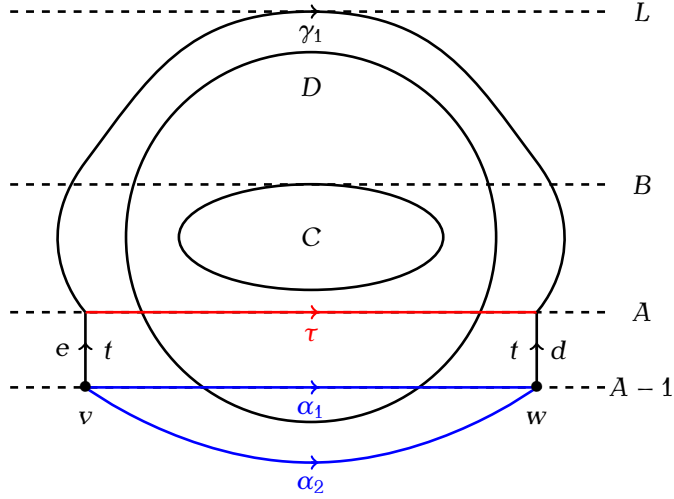}
\vss }
\vspace{-.2in}
\caption{Sliding $\gamma$ Up}
\label{Fig3}
\end{figure}
If $A_1\geq A-1$, then let $\mu_0=\gamma$. If $A_1<A-1$ then Lemma \ref{slideupward} implies that either $\gamma$ is homotopically trivial in $\Gamma-C$ (and we are finished) or there is a homotopy
in $\Gamma-C$ of $\gamma$ to an edge path loop $\gamma_1'$ such that $p(\gamma_1')\subset [A-1,\infty)$, the homotopy fixes the edges of $\gamma$ in levels $A-1$ and above, and the only edges of $\gamma_1'$ above level $A-1$ (including $t$-labeled edges connecting levels $A-1$ and $A$) are edges of $\gamma$. If $\gamma_1'$ has image in level $A-1$ then it is homotopically trivial in that level (hence missing $C$) and again we are finished. Let $\gamma_1$ be obtained from $\gamma_1'$ by eliminating backtracking (which can only occur in level $A-1$). If $\alpha_1$ is a maximal subpath of $\gamma_1$ in level $A-1$ and $e$ and $d$ are the edges of $\gamma_1$ that (cyclically) immediately precede and follow $\alpha_1$, then $e$ and $d$ are edges of $\gamma$ labeled $t$. In particular if $v$ and $w$ are the initial and end points of $\alpha_1$ then $v$ and $w$ are in the single unbounded component of $v\Gamma_0-D$. Hence there is an $\mathcal H_0$ path $\alpha_2$ (without backtracking) in $v\Gamma_0-D$ from $v$ to $w$. Now $\alpha_1$ and $\alpha_2$ are homotopic by a homotopy in $v\Gamma_0$ (in level $A-1$) and hence in $\Gamma-C$ (see Figure \ref{Fig3}). Let $\mu_0$ be obtained from $\gamma_1$ by replacing each maximal $\mathcal H_0$ subpath $\alpha_1$ of $\gamma_1$ with a corresponding $\alpha_2$. Note that there is no backtracking in $\mu_0$.  We have that $\gamma$ is homotopic to $\mu_0$ by a homotopy in $\Gamma-C$, the image of $\mu_0$ is in $\Gamma-D$ and $p(\mu_0)\subset [A-1,\infty)$. 

If $A_1>A-1$, let $\mu_1=\gamma$. Otherwise let $\alpha_2$ be a maximal subpath of $\mu_0$ in level $A-1$. Let $e$ and $d$ be the edges of $\mu_0$ that (cyclically) immediately precede and follow $\alpha_2$ (see Figure \ref{Fig3}). 
The path $\beta=(e,\alpha_2, d)$ is such that $\bar\beta=t^{-1}\bar\alpha_2 t$. 
Lemma \ref{smallmove} implies that
$\beta$  is homotopic to an $\mathcal H_0$ path $\tau$ by a homotopy $H$ with image in $St^{N_0+N_{\ref{SSL1}}}(im(\alpha_2))$ and  with $p(H)\subset [ A-1, A]$. In particular $H$ (and $\tau$) avoids $St^{(B-A+1)(N_{\ref{SSL1}} +N_0)}(C)$. For all such maximal subpaths $\alpha_2$ of $\mu_0$ in level $A-1$ replace $(e,\alpha_2,d)$ by the corresponding $\tau$. Eliminate backtracking (all in level $A$) and call the resulting path $\mu_1$.
The result is that $\mu_0$ (and $\gamma$) is homotopic to an edge path loop $\mu_1$ with image in $\Gamma-St^{(B-A+1)(N_{\ref{SSL1}} +N_0)}(C)$ by a homotopy in $\Gamma-C$ and $p(\mu_1)\subset [A,\infty)$.   
If $A_1>A$, let $\mu_2=\gamma$. Otherwise,
we can obtain $\mu_2$ by apply Lemma \ref{smallmove} to each maximal subpath of $\mu_1$ in level $A$, as long as $\mu_1$ does not have image in level $A$. If $\mu_1$ has image in level $A$, then $\mu_1$ is an $\mathcal H_0$ loop at say the vertex $v$. Lemma \ref{SSL1} implies that $\mu_1$ is homotopic to an $\mathcal H_1$ edge path loop $\mu_1'$ at $v$ by a homotopy in $St^{N_{\ref{SSL1} }}(im(\mu_1))$. If $e$ is the edge labeled $t$ at $v$ then Lemma \ref{oldslide} implies that $(e^{-1},\mu_1', e)$ is homotopic to an $\mathcal H_0$ path by a homotopy $H$ with image in $St^{N_0}(im(\mu_1'))$ and such that $p(H)\subset [p(v), p(v)+1]$. This means that $(e^{-1}, \mu_1, e)$ is homotopic to an $\mathcal H_0$ path in level $A+1$ by a homotopy in $\Gamma-St^{(B-A)(N_{\ref{SSL1}}+{N_0}}(C)$. If $\mu_1$ is not in level $A$ we apply Lemma \ref{smallmove} to maximal subpaths of $\mu_1$ in level $A$, so that in either case we have that $\mu_1$ is homotopic to an edge path loop $\mu_2$ (without backtracking) with image avoiding  $St^{(B-A)(N_{\ref{SSL1}}+{N_0}}(C)$, by a homotopy in $\Gamma-C$ and $p(\mu_2)\subset [A+1,\infty)$.

Repeating this procedure $k=B-A+2$ times (starting with $\mu_0$) we obtain an edge path loop $\mu_k$ (without backtracking) with $p(\mu_k)$ either equal to $B+1$ (if $B_1\leq B+1$) or a subset of $[B+1,B_1]$ (if $B_1>B+1$). We have $\gamma$ is homotopic to $\mu_k$ by a homotopy in $\Gamma-C$. In the latter case, use Lemma \ref{smallmove} to move lowest level maximal $\mathcal H_0$ subpaths of $\mu_k$ (and of the resulting loops) up one level by homotopies with image above level $B$ and (hence avoiding $C$) until a loop in  a single level is obtained. This loop is homotopically trivial in that level. {\bf This completes the proof that if $H_0$ is 1-ended then $G$ is simply connected at $\infty$.} 

\medskip

Now we turn to the semistability part of our theorem. 
The next lemma allows us to replace a path with labeling $(t^{-1}, \bar \alpha,t)$ (where $\bar\alpha$ is an $\mathcal H_0$ word) with a path comprised of  subpaths either in the level above the level of $\alpha$ or in the compliment of an arbitrary compact set $E$. In Figure \ref{Fig4} the path $(e^{-1}, \alpha, d)$ is homotopic to $(\gamma_1', c^{-1},\tau_2, b, \gamma_2'^{-1})$ where $p(\gamma_i')=p(\alpha)+1$ and the path $(c^{-1},\tau_2, b)$ has image in $\Gamma-E$.

\begin{lemma} \label{UpOut} {\bf (UpOut)}
Suppose: 
\begin{itemize} 
\item [$A.$] $D$ is a compact subcomplex of $\Gamma$ such that for each vertex $v\in \Gamma$ the space $v\Gamma_0-D$ is a union of unbounded path components, and
\item [$B.$] $\beta=(e^{-1},\alpha, d)$ is an edge path of $\Gamma$ such that $\bar\beta=t^{-1}\bar\alpha t$,  $e$ and $d$ are edges of $\Gamma-D$, $\alpha$ is an $\mathcal H_0$ path and the end points of $\alpha$ can be joined by a path $\alpha'$ in $\Gamma-D$ such that $p(\alpha'(t))\leq p(\alpha)$ for all $t$. 
\end{itemize}
\noindent Then, for any compact set $E\subset \Gamma$ such that $D\subset E$, there is a homotopy $M$ of $\beta$  to a path $(\gamma_1', c^{-1} , \tau_2, b, \gamma_2'^{-1})$ such that: 
\begin{enumerate}
\item $\tau_2$, $\gamma_1'$ and $\gamma_2'$ are $\mathcal H_0$ paths, 
\item $b$ and $c$ are $t$ edges, 
\item $(c^{-1}, \tau_2, b)$ has image in $\Gamma-E$, 
\item for $i\in\{1,2\}$, the end points of $\gamma_i'$ can be connected by an edge path $\gamma_i''$ with image in $\Gamma-D$ such that $p(\gamma_i''(t))\leq p(\gamma_i')$ for all $t$, and
\item$p(M)$ has image in $[p(\alpha), p(\alpha)+1]$. 
\end{enumerate}
\end{lemma}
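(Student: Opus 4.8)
The plan is to build the required path by inserting trivial ``spikes'' into $\beta$ and then pushing two of the resulting pieces up one level with Lemma~\ref{oldslide}, while routing the surviving level-$p(\alpha)$ piece out past $E$. Write $L=p(\alpha)$ and let $x,y$ be the initial and terminal vertices of $\alpha$, so that $e$ is the $t$-edge from $x$ up to $x_+$ and $d$ is the $t$-edge from $y$ up to $y_+$ (both at level $L+1$), and $\beta$ runs $x_+\to x\to y\to y_+$. Since $e,d$ lie in $\Gamma-D$ we have $x,y\in x\Gamma_0-D$. The key structural fact is that the vertices of the coset $x\Gamma_0$ (at level $L$) whose $t$-edges land in the single coset $x_+\Gamma_0$ (at level $L+1$) are exactly the vertices of $x\Gamma_1$, and likewise $y\Gamma_1$ sits beneath $y_+\Gamma_0$; this is the only way the relations $t^{-1}ht=\hat\phi(h)$ let us move between the two levels, so the landing points of the excursion must be taken in $x\Gamma_1$ and $y\Gamma_1$.

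First I would fix the landing data. By hypothesis A the component $U^D$ of $x\Gamma_0-D$ containing $x$ is unbounded, so $U^D-E$ has an unbounded component $U$, and $U$ is a component of $x\Gamma_0-E$ contained in $U^D$. Because $H_1$ has finite index in $H_0$, Lemma~\ref{FIclose} shows that any vertex of $U$ lying at distance more than $\bar N_{\ref{FIclose}}$ from $E$ is joined inside $U$ to a vertex of $x\Gamma_1$ and to a vertex of $y\Gamma_1$; I choose $c_-\in x\Gamma_1\cap U$ and $b_-\in y\Gamma_1\cap U$, both at distance at least $2$ from $E$, let $c,b$ be the $t$-edges above them with tops $c_+\in x_+\Gamma_0$ and $b_+\in y_+\Gamma_0$, and fix $\mathcal H_1$ edge paths $\sigma_1$ from $x$ to $c_-$ and $\sigma_2$ from $y$ to $b_-$ inside $x\Gamma_0$.

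Next I would assemble the homotopy. Inserting the trivial spikes $(\sigma_1,\sigma_1^{-1})$ and $(\sigma_2,\sigma_2^{-1})$ and the trivial pairs $(c,c^{-1})$, $(b,b^{-1})$, the loop $\beta$ is homotopic (by a folding homotopy in level $L$) to the concatenation $(e^{-1},\sigma_1,c)\,(c^{-1},\sigma_1^{-1},\alpha,\sigma_2,b)\,(b^{-1},\sigma_2^{-1},d)$. Lemma~\ref{oldslide} applied to the two outer blocks converts $(e^{-1},\sigma_1,c)$ into an $\mathcal H_0$ path $\gamma_1'$ at level $L+1$ and $(d^{-1},\sigma_2,b)$ into an $\mathcal H_0$ path $\gamma_2'$ at level $L+1$ (so $(b^{-1},\sigma_2^{-1},d)\simeq\gamma_2'^{-1}$), by homotopies with $p$-image in $[L,L+1]$. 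The middle block equals $(c^{-1},\tau_2^0,b)$, where $\tau_2^0=(\sigma_1^{-1},\alpha,\sigma_2)$ is an $\mathcal H_0$ path from $c_-$ to $b_-$ inside $x\Gamma_0$. Because the Cayley $2$-complex $\Gamma_0$ is simply connected, $\tau_2^0$ is homotopic rel endpoints, within level $L$, to any $\mathcal H_0$ path from $c_-$ to $b_-$ in that coset; I take $\tau_2\subset U$. Then $\tau_2$ avoids $E$, and since $c_-,b_-$ are at distance at least $2$ from $E$ the edges $c,b$ avoid $E$ as well, giving conclusion~3; all homotopies used have $p$-image in $[L,L+1]$, giving conclusion~5. (Note that the construction never requires $q(\bar\alpha)\in H_1$, which is why no such hypothesis is needed.)

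The last point, conclusion~4, is the main obstacle. For $\gamma_1'$ one takes $\gamma_1''$ to be $e^{-1}$, followed by a level-$L$ path from $x$ to $c_-$ inside $U^D$, followed by $c$; this lies in $\Gamma-D$ at levels $\le L+1$. The difficulty is $\gamma_2'$: we were forced to place $b_-$ in the \emph{same} component $U$ of $x\Gamma_0-E$ as $c_-$ in order that $\tau_2$ avoid $E$, and $U\subset U^D$, the component of $x$ — yet $y$ need not lie in $U^D$, so inside level $L$ the vertex $y$ may be cut off from $b_-$. This is exactly where hypothesis B is used: the path $\alpha'$ joins $y$ to $x$ in $\Gamma-D$ at levels $\le L$, and concatenating $d^{-1}$, then $\alpha'$ from $y$ to $x$, then a level-$L$ path from $x$ to $b_-$ inside $U^D$, then $b$, produces the required $\gamma_2''$ in $\Gamma-D$ at levels $\le L+1$. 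Thus hypotheses A and B are invoked precisely to reconcile the demand ``the excursion $(c^{-1},\tau_2,b)$ avoids $E$'' with the demand ``$\gamma_1',\gamma_2'$ again satisfy hypothesis B one level up'' (which is what makes the lemma iterable in the semistability argument), and carrying out this reconciliation — choosing the single component $U$ and the bridging provided by $\alpha'$ — is the heart of the proof.
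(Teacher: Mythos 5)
Your proof is correct and follows essentially the same route as the paper's: use Lemma \ref{FIclose} to land at vertices of $x\Gamma_1$ and $y\Gamma_1$ far from $E$, slide the two $\mathcal H_1$ connector paths up one level with Lemma \ref{oldslide}, reroute the surviving level-$p(\alpha)$ excursion off $E$ inside the simply connected coset $x\Gamma_0$, and invoke hypothesis $A$ together with the path $\alpha'$ of hypothesis $B$ to produce $\gamma_1''$ and $\gamma_2''$. The only (harmless) difference is a mirror image of the paper's bookkeeping: the paper anchors the excursion in the component of $w$ in $v\Gamma_0-D$ (so $\alpha'$ is used for $\gamma_1''$) and keeps $\tau_2$ short (length $\leq N_{\ref{FIclose}}$) and far from $E$, while you anchor it in the component of $x$ (so $\alpha'$ is used for $\gamma_2''$) and keep $\tau_2$ inside an unbounded component of $x\Gamma_0-E$, with trivial-spike insertion replacing the paper's substitution of $\tau_1$ in the homotopy algebra.
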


\begin{proof} Suppose $\rho_1$ and $\rho_2$ are paths with the same end points. 
We use the notation $\rho_1 \buildrel K\over \sim \rho_2$ when $K$ is a homotopy of $\rho_1$ to $\rho_2$ relative to end points.
Say the initial (terminal) vertex of $\alpha$ is $v$ ($w$). Since $v\Gamma_1\subset v\Gamma_0$ and $w\in v\Gamma_0-D$, hypothesis ({\it A.}) implies that there is an $\mathcal H_0$ path $\tau_1$ at $w$ with image in $v\Gamma_0-D$ such that  the end point $z$ of $\tau_1$ belongs to $v\Gamma_1-St^{N_{\ref{FIclose}}+1} (E)$. (See Figure \ref{Fig4}.) Let $\gamma_1$ be an $\mathcal H_1$ path from $v$ to $z$ and $K_1$ a homotopy in $v\Gamma_0$ realizing: 
$$(\alpha, \tau_1)\buildrel K_1\over \sim\gamma_1.$$ 
Lemma \ref{FIclose} implies there is an $\mathcal H_0$ edge path $\tau_2$ of length $\leq N_{\ref{FIclose}}$ from $z$ to $y\in w\Gamma_1$.  Let $\gamma_2$ be an $\mathcal H_1$ path from $w$ to $y$ and $K_2$ a homotopy in $w\Gamma_0$ realizing:
 $$(\tau_1,\tau_2)\buildrel K_2\over \sim\gamma_2.$$ 
Let $c$ and $b$ be the $t$ edges at $z$ and $y$ respectively. Since $z\in \Gamma-St^{N_{\ref{FIclose}}+1} (E)$ we have that $(c^{-1},\tau_2,b)$ has image in $\Gamma-E$.  At this point conclusions ({\it 2.}) and ({\it 3.}) are satisfied. By Lemma \ref{oldslide}, there are $\mathcal H_0$ paths $\gamma_1'$ and $\gamma_2'$ such that: 
$$(e^{-1},\gamma_1, c)\buildrel L_1\over \sim\gamma_1'\hbox{ and }(d^{-1},\gamma_2,b)\buildrel L_2\over \sim\gamma_2'$$
with $p(L_i)\subset [p(\alpha),p(\alpha)+1]$. Conclusion ({\it 1.}) is satisfied. Hypothesis ({\it B.}) and the definitions of $\tau_1$ and $\tau_2$ imply that  $\gamma_1''=(e^{-1}, \alpha',\tau_1, c)$ is a path in $\Gamma-D$ connecting the end points of $\gamma_1'$, and $\gamma_2''=(d^{-1}, \tau_1,\tau_2,b)$ is a path in $\Gamma-D$ connecting the end points of $\gamma_2'$. Conclusion ({\it 4.}) is satisfied. Combining $K_i$ and $L_i$  to form the homotopy $M_i$ we have: 
$$(e^{-1},\alpha,\tau_1,c)\buildrel M_1\over \sim\gamma_1'\hbox{ and }(d^{-1},\tau_1,\tau_2,b)\buildrel M_2\over \sim\gamma_2'$$ 
with $p(M_i)= [p(\alpha), p(\alpha)+1]$. 
\begin{figure}
\vbox to 3in{\vspace {-3in} \hspace {-1in}
\hspace{-1 in}-1\includegraphics[scale=1]{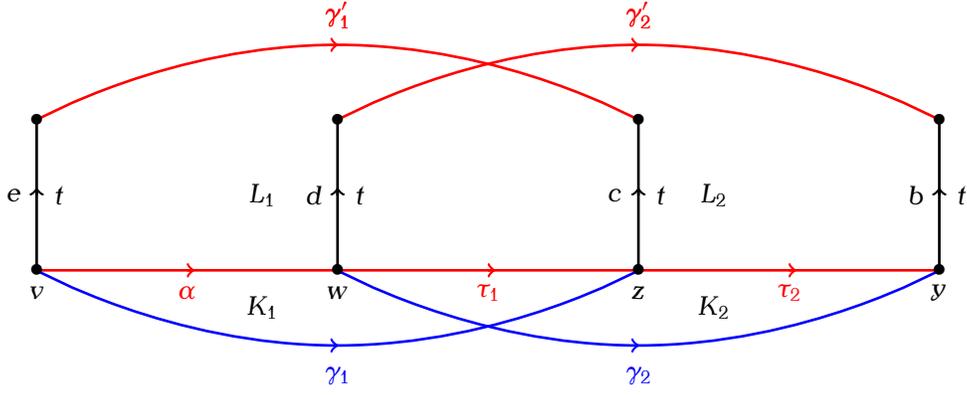}
\vss }
\vspace{-2in}
\caption{Sliding $\beta =(e^{-1}, \alpha, d)$ up and out}
\label{Fig4}
\end{figure}
The homotopy $M_2$ implies  that: 
$$\tau_1\buildrel M_2'\over \sim (d,\gamma_2',b^{-1},\tau_2^{-1}).$$ 
Substituting for $\tau_1$ in $M_1$ we have: 
$$(e^{-1},\alpha,d,\gamma_2',b^{-1},\tau_2^{-1},c)\sim \gamma_1'$$
and so: 
$$(e^{-1},\alpha,d)\buildrel M\over \sim (\gamma_1',c^{-1},\tau_2,b,\gamma_2'^{-1})$$
where $p(M)= [p(\alpha),p(\alpha)+1]$, 
so that ({\it 5.}) is satisfied.
\end{proof}

Use of the next lemma is one of the primary difference between the proof of (Theorem 3.1, \cite{HNN1}) and the current argument. It is one of the main ingredients in the proof of Theorem \ref{MMFI}. If one thinks of $t$-labeled edges of $\Gamma$ as vertical and $\mathcal H_0$ labeled edges as horizontal, then Lemma \ref{push} explains how to push a vertical edge $e$ that is ``far enough" from a compact set $C\subset \Gamma$, to a vertical edge $e_1$ that is ``arbitrarily far" from $C$, by a homotopy that misses $C$.

\begin{lemma} \label{push} {\bf (push)} 
Given a compact subcomplex $C$ of $\Gamma$ there is a compact subcomplex $D_{\ref{push}}(C)$ of $\Gamma$ containing $St^{N_0+N_{\ref{SSL1}}}(C)$ such that for any third compact set $E$ and any edge $e$ labeled $t$ in $\Gamma-D_{\ref{push}}(C)$, there is an edge path $(\alpha, e_1,\beta)$ from $e(0)$ to $e(1)$ such that $\alpha$ and $\beta$ are $\mathcal H_0$  paths, the edge $e_1$ has image in $\Gamma-E$ and label $t$, and $e$ is homotopic to $(\alpha, e_1,\beta)$ by a homotopy with image in $\Gamma-C$. 
\end{lemma}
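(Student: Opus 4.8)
The plan is to realize the ``push'' as a single horizontal slide of the vertical edge $e$ along an $\mathcal H_1$ path in its lower level, using the conjugation $2$-cells exactly as in Lemma \ref{oldslide}. Write $e=(v,vt)$ with $p(v)=n$, so that $v$ lies in the sheet $v\Gamma_0$ (a single level-$n$ translate of $\Gamma_0$). If $\alpha$ is an $\mathcal H_1$ path in $v\Gamma_1$ from $v$ to some $v_1$, and $e_1=(v_1,v_1t)$ is the $t$-edge at $v_1$, then the conjugation relations $t\hat\phi(h)t^{-1}=h$ assemble into a ladder of conjugation $2$-cells realizing a homotopy $e\sim(\alpha,e_1,\beta)$, where $\beta=\hat\phi(\alpha)^{-1}$ is an $\mathcal H_0$ path in level $n+1$ from $v_1t$ to $vt$; by $(\ast)$ this homotopy has image in $St^{N_0}(\mathrm{im}(\alpha))$ (this is just Lemma \ref{oldslide} read as a slide of $e$). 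So the whole problem reduces to producing a single $t$-edge $e_1$ with $e_1\subset\Gamma-E$ whose base $v_1$ lies in the coset $v\Gamma_1$ and can be joined to $v$ by an $\mathcal H_1$ path $\alpha$ whose $N_0$-neighborhood avoids $C$.

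Producing such an $\alpha$ is the crux, and this is where the non-one-endedness of $H_0$ forces extra care: I cannot simply join $v$ to a far-away base point inside $v\Gamma_0$ while avoiding $C$, since $v\Gamma_0$ minus a compact set may have several unbounded components together with some bounded ones. First I would set $K=St^{N_0+N_{\ref{SSL1}}}(C)$. Only finitely many sheets $S=v\Gamma_0$ meet $K$, and for each such $S$, Theorem \ref{ES} shows $S-K$ has only finitely many bounded components, whose closures have compact union $B_S$. I then define $D_{\ref{push}}(C)=K\cup\bigcup_S B_S$, a finite union of compact subcomplexes, hence a compact subcomplex containing $St^{N_0+N_{\ref{SSL1}}}(C)$. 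Now if $e\subset\Gamma-D_{\ref{push}}(C)$, then $v\notin K$ and $v$ lies in no $B_S$, so $v$ lies in an \emph{unbounded} component $U$ of $(v\Gamma_0)-K$.

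Next I would locate the target inside $U$. Since $U$ is unbounded and $E\cup K$ is compact, choose a vertex $u\in U$ with $d(u,E\cup K)>N_{\ref{FIclose}}+1$. By Lemma \ref{FIclose} there is an $\mathcal H_0$ path $\sigma$ in $v\Gamma_0$ of length $\le N_{\ref{FIclose}}$ from $u$ to a vertex $v_1\in v\Gamma_1$; the choice of $u$ keeps every vertex of $\sigma$ outside $K$ and keeps $d(v_1,E)>1$, so $\sigma$ joins $u$ to $v_1$ inside $U$ (whence $v_1\in U\cap v\Gamma_1$) and the $t$-edge $e_1=(v_1,v_1t)$ has image in $\Gamma-E$. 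Because $v$ and $v_1$ lie in the connected set $U\subset(v\Gamma_0)-K$, there is an $\mathcal H_0$ path $\alpha_0$ from $v$ to $v_1$ with $\mathrm{im}(\alpha_0)\subset\Gamma-St^{N_0+N_{\ref{SSL1}}}(C)$. Finally, since $v_1\in v\Gamma_1$, Lemma \ref{SSL1} homotopes $\alpha_0$ to an $\mathcal H_1$ path $\alpha$ from $v$ to $v_1$ with image in $St^{N_{\ref{SSL1}}}(\mathrm{im}(\alpha_0))$, hence with $\mathrm{im}(\alpha)\subset\Gamma-St^{N_0}(C)$.

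With this $\alpha$ in hand, the slide of the first paragraph yields the required homotopy $e\sim(\alpha,e_1,\beta)$; its image lies in $St^{N_0}(\mathrm{im}(\alpha))\subset\Gamma-C$, while $e_1\subset\Gamma-E$, so all conclusions hold. The hard part is entirely the component bookkeeping: arranging that the base point of the pushed edge lands in the \emph{same} unbounded component $U$ of the sheet as $v$, while simultaneously lying in the coset $v\Gamma_1$ (so the conjugation slide is available) and far from $E$. The constant $N_0+N_{\ref{SSL1}}$ in the definition of $D_{\ref{push}}(C)$ is precisely what absorbs, first, the $\mathcal H_0\to\mathcal H_1$ correction of Lemma \ref{SSL1} and, then, the width $N_0$ of the conjugation $2$-cells, keeping both homotopies off $C$.
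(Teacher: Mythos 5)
Your proposal is correct and follows essentially the same route as the paper: enlarge $St^{N_0+N_{\ref{SSL1}}}(C)$ by the bounded complementary components of the finitely many sheets $v\Gamma_0$ it meets (via Theorem \ref{ES}) to define $D_{\ref{push}}(C)$, travel within the unbounded component to a point far from $E$, use Lemma \ref{FIclose} to land in $v\Gamma_1$, convert to an $\mathcal H_1$ path by Lemma \ref{SSL1}, and slide the $t$-edge along it with conjugation $2$-cells, the constants bookkeeping exactly as in the paper's argument.
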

\begin{proof} Note that if $w$ and $v$ are vertices of $\Gamma$ and $w\in v\Gamma_0$ then $w\Gamma_0=v\Gamma_0$. For $K$ a subcomplex of $\Gamma$, we take $St(K)$ in $\Gamma$. (See $(\ast)$ for the definition of $N_0$.) There are only finitely many $v\Gamma_0$ that intersect $St^{N_{\ref{SSL1}}+N_0}(C)$. Choose $D$ a finite subcomplex of $\Gamma$ such that if $v\Gamma_0\cap St^{N_0+N_{\ref{SSL1}}}(C)\ne\emptyset$, then $St^{N_0+N_{\ref{SSL1}}}(C)$ union all bounded components of $v\Gamma_0-St^{N_0+N_{\ref{SSL1}}}(C)$ belong to $D$ (see Theorem \ref{ES}).  Let $v=e(0)$. If $v\in \Gamma-D$ then by the definition of $D$, there are edge paths in $v\Gamma_0-St^{N_0+N_{\ref{SSL1}}}(C)$ from $v$ to vertices arbitrarily far from $C$. In particular there is an edge path $\alpha_1$ in $v\Gamma_0-St^{N_0+N_{\ref{SSL1}}}(C)$ from $v$ to $w_1\in v\Gamma_0-St^{N_{\ref{FIclose}}+1}(E\cup D)$. By Lemma \ref{FIclose} there is an edge path $\alpha_2$ (of length $\leq N_{\ref{FIclose}}$) from $w_1$ to $w\in  v\Gamma_1-St(E\cup D)$. In particular, $\alpha_2$ avoids $D$. Then $\alpha=(\alpha_1,\alpha_2)$ has image in $v\Gamma_0-St^{N_0+N_{\ref{SSL1}}}(C)$. By Lemma \ref{SSL1}, $\alpha$ is homotopic to an edge path $\alpha'$ (with image in $v\Gamma_1$) by a homotopy in $v\Gamma_0-St^{N_0}(C)$. 
Let $e_1$ be the edge labeled $t$ at $w$. Since $w\in v\Gamma_0-St(E)$,  $e_1$ avoids $E$. Combining conjugation 2-cells (one for each edge of $\alpha'$), the path $\alpha'$ is homotopic to an edge path $(e,\hat \beta,e_1^{-1})$ by a homotopy in $St^{N_0}(im(\alpha'))\subset \Gamma-C$. Let $\beta=\hat \beta^{-1}$. 
\end{proof}

Lemma \ref{push} allows us to focus on edge paths in $v\Gamma_0$ ($v\in G$) with end points outside of a compact set $E$. 
Theorem \ref{ES} implies that for any compact set $E\subset \Gamma$ and $v\in G$, $E$ union all bounded components of $v\Gamma_0-E$ is compact. Since $E$ only intersects finitely many $v\Gamma_0$ non-trivially,  $E$ is contained in a compact set $E'$ such that for any $v\in G$ the set 
$v\Gamma_0-E$ is a union of unbounded components. 

The point of the next lemma is to show the path $\gamma$ is homotopic  to a path comprised of several $H_0$ subpaths in $\Gamma-E$ and a path $\gamma_1$ ``one level higher" than the path $\gamma$. It will be inductively applied in Lemma \ref{slideC}.

\begin{remark}\label{Rslideup} {\bf (Rslideup)} 
Lemma \ref{slideup} is valid when $F=\emptyset$. In this case, $St^k(F)=\emptyset$ for all $k\geq 0$. We will applied this special case to paths $\gamma$ in a level above the highest level of a certain compact set $C\subset \Gamma$. In this case the homotopy $H$ of Lemma \ref{slideup} avoids $C$ since $p(H)$ has image above the highest level of $C$.
\end{remark}

\begin{lemma} \label{slideup} {\bf (slideup)} 
Suppose $E$ and $F$ are finite subcomplexes of $\Gamma$, such that $E$ contains $St^{N_{\ref{SSL1}} +N_0}(F)$, and  $g\Gamma_0-E$ is a union of unbounded components for any $g\in G$.  If $\gamma$ is an edge path in $v\Gamma_0-St^{N_{\ref{SSL1}} +N_0}(F)$ with end points in $\Gamma-E$, then there is a homotopy $H$ with image in $\Gamma-F$, of $\gamma$  to an edge path $(\tau, e, \gamma_1, e'^{-1}, \tau'^{-1})$ where $e$ and $e'$ are edges labeled $t$; $\tau$ and $\tau'$ are $\mathcal H_0$ paths in $\Gamma-E$; and $ \gamma_1$ is an $\mathcal H_0$ path in $\Gamma-F$ (so that $p(\gamma)+1=p(\gamma_1)$) with end points in $\Gamma-E$. Furthermore $p(H)$ has image in $[p(v), p(v)+1]$.
\end{lemma}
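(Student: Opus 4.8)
The plan is to mimic the construction used in Lemma \ref{UpOut}: first push the two endpoints of $\gamma$ outward (staying inside $v\Gamma_0$ and avoiding $E$) until they land on the coset $v\Gamma_1$, then straighten the resulting $\mathcal H_0$ path into an $\mathcal H_1$ path, and finally slide that $\mathcal H_1$ path one level upward using the conjugation $2$-cells. Concretely, write $v_0=\gamma(0)$ and $v_1=\gamma(1)$. By hypothesis both lie in $\Gamma-E$ and in $v\Gamma_0$, and $v\Gamma_0-E$ is a union of unbounded components, so $v_0$ lies in an unbounded component of $v\Gamma_0-E$. Hence there is an $\mathcal H_0$ path in $v\Gamma_0-E$ from $v_0$ to a vertex arbitrarily far from $E$; following it by a path of length $\leq N_{\ref{FIclose}}$ supplied by Lemma \ref{FIclose}, I obtain an $\mathcal H_0$ path $\tau$ in $v\Gamma_0-E$ from $v_0$ to a vertex $z_0\in v\Gamma_1$ which I arrange to be so far from $E$ that the bounded-length correction cannot re-enter a neighborhood of $E$. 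Build $\tau'$ from $v_1$ to a vertex $z_1\in v\Gamma_1$ in the same way, and set $\delta=(\tau^{-1},\gamma,\tau')$, an $\mathcal H_0$ path in $v\Gamma_0$ from $z_0$ to $z_1$, whose two endpoints lie in the common coset $v\Gamma_1$.

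Next I apply Lemma \ref{SSL1} with base vertex $z_0$ to homotope $\delta$ to an $\mathcal H_1$ path $\delta_1$ in $v\Gamma_1$, by a homotopy $K$ with image in $St^{N_{\ref{SSL1}}}(im(\delta))$. Letting $e,e'$ be the $t$-edges at $z_0,z_1$, Lemma \ref{oldslide} homotopes $(e^{-1},\delta_1,e')$ to an $\mathcal H_0$ path $\gamma_1$ in level $p(v)+1$ by a homotopy $L$ with image in $St^{N_0}(im(\delta_1))$ and with $p(L)\subset[p(v),p(v)+1]$. Rearranging, $\delta_1$ is homotopic to $(e,\gamma_1,e'^{-1})$, so concatenating $K$, $L$, and the obvious cancellation of the backtracking pairs $\tau^{-1}\tau$ and $\tau'\tau'^{-1}$ yields a homotopy $H$ of $\gamma$ to $(\tau,e,\gamma_1,e'^{-1},\tau'^{-1})$, which is the asserted form. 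The endpoints $z_0t,z_1t$ of $\gamma_1$ are adjacent to $z_0,z_1$, which sit far from $E$, so they lie in $\Gamma-E$; and $\tau,\tau'$ lie in $\Gamma-E$ by construction, so the $E$-conclusions hold. The $p$-image of $H$ is controlled by $K$ (which sits in level $p(v)$) together with $L$ (whose $p$-image is $[p(v),p(v)+1]$) and the backtracking homotopy (in level $p(v)$), giving $p(H)\subset[p(v),p(v)+1]$.

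The one point genuinely needing attention — and precisely the reason the hypothesis insists that $E$ contain $St^{N_{\ref{SSL1}}+N_0}(F)$ — is that the entire homotopy avoids $F$. Since $\gamma$ avoids $St^{N_{\ref{SSL1}}+N_0}(F)$ and $\tau,\tau'$ avoid $E\supseteq St^{N_{\ref{SSL1}}+N_0}(F)$, the path $\delta$ avoids $St^{N_{\ref{SSL1}}+N_0}(F)$; consequently $K$, whose image lies in $St^{N_{\ref{SSL1}}}(im(\delta))$, avoids $St^{N_0}(F)$, and in particular $\delta_1$ avoids $St^{N_0}(F)$. Then $L$, whose image lies in $St^{N_0}(im(\delta_1))$, avoids $F$, as does $\gamma_1$; and the cancellation homotopy has image in $im(\tau)\cup im(\tau')\subset\Gamma-F$. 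Thus $H$ avoids $F$ throughout.

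I expect no deeper obstacle here: the construction is structurally the same ``extend out of $E$, straighten to $\mathcal H_1$, slide up by conjugation cells'' routine already used in Lemmas \ref{oldslide}, \ref{SSL1}, and \ref{UpOut}. The only real work is the uniform bookkeeping of the $St$-neighborhoods, and the additive constant $N_{\ref{SSL1}}+N_0$ built into the statement is exactly what absorbs the successive enlargements coming from Lemma \ref{SSL1} and the conjugation $2$-cells.
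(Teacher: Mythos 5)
Your proposal is correct and follows essentially the same route as the paper's proof: push the endpoints of $\gamma$ far from $E$ inside $v\Gamma_0-E$, use Lemma \ref{FIclose} to land in a coset of $\Gamma_1$, straighten with Lemma \ref{SSL1}, slide up one level with Lemma \ref{oldslide}, and track the $St$-neighborhoods exactly as you do to get $F$-avoidance and $p(H)\subset[p(v),p(v)+1]$. The only (inessential) difference is that you correct both endpoints into $v\Gamma_1$, whereas the paper corrects only one endpoint into the coset of the pushed-out terminal vertex.
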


\begin{proof} 
In Figure \ref{Fig5}, the path $(\tau_1,\tau_2)$ will be our path $\tau$ and combining the two homotopies $K_1$ and $K_2$ will give the homotopy $H$. Let $v$ be the initial and $w$ the terminal vertex of $\gamma$. Since $w\Gamma_0-E$ is a union of unbounded components, there is an $\mathcal H_0$ path $\tau'$ in $w\Gamma_0-E$ from $w$ to a vertex $w_1\in w\Gamma_0-St(E)$. See Figure \ref{Fig5}. 
There is an $\mathcal H_0$ path $\tau_1$  in $v\Gamma_0-E$ from $v$ to $v_1\in v\Gamma_0-St^{N_{\ref{FIclose}}+1}(E)$. Note that:
$$v\Gamma_0=w\Gamma_0=v_1\Gamma_0=w_1\Gamma_0.$$

\begin{figure}
\vbox to 3in{\vspace {-2in} \hspace {-1.2in}
\hspace{-1 in}-1\includegraphics[scale=1]{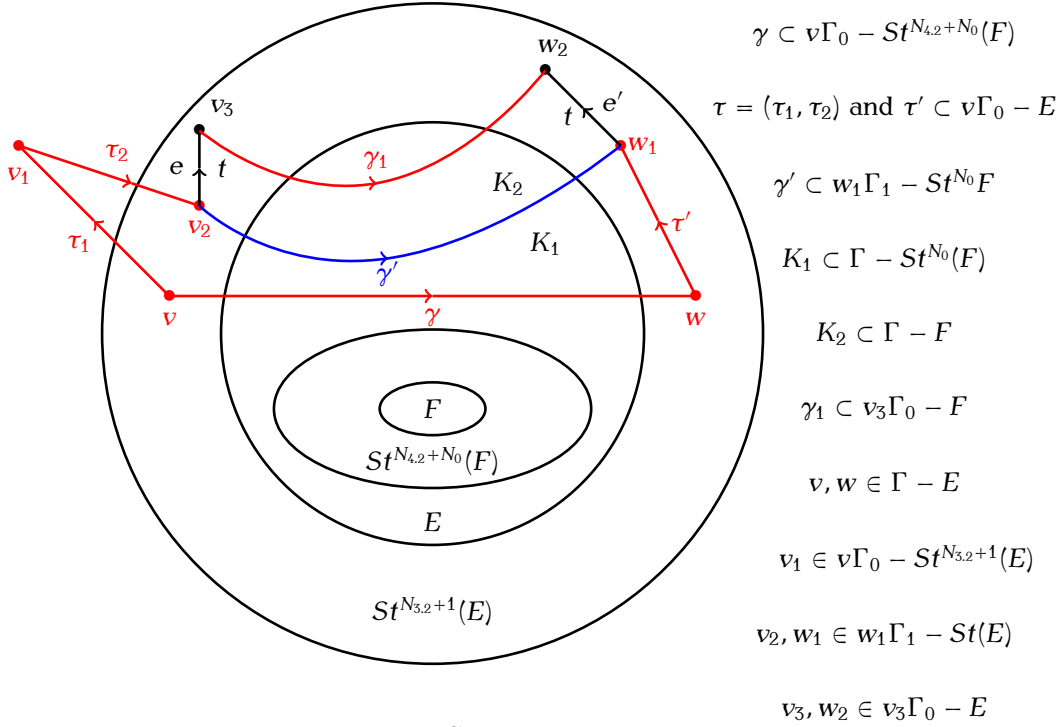}
\vss }
\vspace{.2in}
\caption{Sliding $\gamma$  upward to $\gamma_1$}
\label{Fig5}
\end{figure}

By Lemma \ref{FIclose} there is an $\mathcal H_0$ path  $\tau_2$  of length $\leq N_{\ref{FIclose}}$ from $v_1$ to a vertex $v_2$ of $w_1\Gamma_1$. Note that $im(\tau_2)\subset v_1\Gamma-St(E)$ (and so $v_2\in w_1\Gamma_1-St(E)$).
By Lemma \ref{SSL1} the path $\nu=(\tau_2^{-1},\tau_1^{-1},\gamma,\tau'_1)$ is homotopic to an $\mathcal H_1$ path $\gamma'$, by a homotopy $K_1$ with image in $St^{N_{\ref{SSL1}}}(im(\nu))$. Since $E$ contains $St^{N_{\ref{SSL1}} +N_0}(F)$ and $\gamma$ avoids $St^{N_{\ref{SSL1}} +N_0}(F)$, the path $\nu$ avoids $St^{N_{\ref{SSL1}} +N_0}(F)$. Hence $K_1$  avoids $St^{N_0}(F)$. In particular, $\gamma'$ avoids $St^{N_0}(F)$.

Let $e$ be the edge labeled $t$ at $v_2$ (with end point $v_3$) and let $e'$ be the edge labeled $t$ at $w_1$ (with end point $w_2$). By Lemma \ref{oldslide}, the path $\gamma'$ is homotopic to $(e,\gamma_1, e'^{-1})$ (where $\gamma_1$ is an $\mathcal H_0$ path) by a homotopy $K_2$ with image in $St^{N_0}(im(\gamma'))$. Since $\gamma'$ avoids $St^{N_0} (F)$, $K_2$ (and $\gamma_1$) avoids $F$. Since $v_2,w_1\in \Gamma-St(E)$, we have $v_3,w_2\in \Gamma- E$. Now combine $K_1$ and $K_2$ to produce $H$ and let $\tau=(\tau_1,\tau_2)$ to finish the proof.
\end{proof}

\begin{lemma}\label{slideC} {\bf (slideC)}
Suppose $C$ and $E$ are finite subcomplexes of $\Gamma$ such that $p(C)\subset [A,B]$ and $E$  contains $St^{(B-A+2)(N_{\ref{SSL1}} +N_0)}(C)$. Also assume that if $g\in G$  then $g\Gamma_0-E$ is a union of unbounded components. If $\gamma$ is an edge path in $v\Gamma_0-St^{(B-A+2)(N_{\ref{SSL1}} +N_0)}(C)$, with end points in $\Gamma-E$ and $p(v)\in [A-1,B]$ then there is a homotopy of $\gamma$ (with image in $\Gamma-C$) to a path in $\Gamma-E$. 
\end{lemma}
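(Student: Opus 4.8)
The plan is to push $\gamma$ upward one level at a time by repeated application of Lemma \ref{slideup}, each application peeling off side pieces that already lie in $\Gamma-E$ and leaving a single ``core'' $\mathcal H_0$ path one level higher, until the core rises above every level of $C$ and then above every level of $E$. Throughout I write $\ell=N_{\ref{SSL1}}+N_0$, and I will arrange that each homotopy avoids a set still containing $C$, so that the composite homotopy avoids $C$, while Lemma \ref{slideup} guarantees that the peeled-off pieces lie in $\Gamma-E$.

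For the first phase set $F_j=St^{(B-A+2-j)\ell}(C)$ for $0\le j\le B-A+2$, so that $F_0=St^{(B-A+2)\ell}(C)\subseteq E$, $F_{B-A+2}=C$, and $St^{\ell}(F_{j+1})\subseteq F_j$. I would apply Lemma \ref{slideup} to $\gamma$ with $F=F_1$ and the given $E$: the hypotheses hold because $E\supseteq F_0\supseteq St^{\ell}(F_1)$, because $g\Gamma_0-E$ is a union of unbounded components by assumption, and because $\gamma$ avoids $St^{\ell}(F_1)\subseteq F_0$. The output is a path $(\tau,e,\gamma_1,e'^{-1},\tau'^{-1})$ in which $\tau,\tau',e,e'$ lie in $\Gamma-E$ and $\gamma_1$ is an $\mathcal H_0$ path in $\Gamma-F_1$, one level higher, with end points in $\Gamma-E$; the homotopy avoids $F_1\supseteq C$. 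Since $\gamma_1$ is again a single-coset $\mathcal H_0$ path, I repeat with $F=F_2$ on $\gamma_1$ (legitimate since $\gamma_1$ avoids $F_1\supseteq St^{\ell}(F_2)$), and so on. After $j$ steps the core sits in level $p(v)+j$ and avoids $F_j$, and each homotopy avoids $F_j\supseteq C$. Because $p(v)\ge A-1$, after at most $B-A+2$ steps the core has risen to a level $\ge B+1$; this is exactly why the stellar radius $(B-A+2)\ell$ is needed, and it is the tight count that forces the exponent in the hypothesis.

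For the second phase the core now lies in a single level strictly above $B$, hence above all of $C$. I continue to apply Lemma \ref{slideup}, but now with $F=\emptyset$ as permitted by Remark \ref{Rslideup}: the hypotheses are trivially met, the side pieces produced again lie in $\Gamma-E$, and since the $p$-image of each such homotopy lies above level $B$ while $p(C)\subseteq[A,B]$, every one of these homotopies avoids $C$. Each step raises the core one further level and peels off more pieces in $\Gamma-E$. Because $E$ is compact it meets only levels up to some finite height, so after finitely many such steps the core lies in a level above the top of $E$ and therefore has image in $\Gamma-E$. At that stage the entire resulting edge path, core and all accumulated side pieces, lies in $\Gamma-E$, while the composite of all the homotopies avoids $C$, completing the proof.

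The routine parts are the verifications that $St^{\ell}(F_{j+1})\subseteq F_j$ and that each reapplication of Lemma \ref{slideup} meets its hypotheses; the substantive point, and the main thing to get right, is the two-phase bookkeeping: keeping the shrinking obstacle sets $F_j$ containing $C$ so the homotopies miss $C$ during the ascent through the levels occupied by $C$, matching the number of ascent steps to the radius $(B-A+2)\ell$, and only then switching to the $F=\emptyset$ regime to carry the core the rest of the way above $E$ using level information alone.
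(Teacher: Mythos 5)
Your proposal is correct and follows essentially the same route as the paper: the identical two-phase induction, first applying Lemma \ref{slideup} with the shrinking obstacle sets $St^{(B-A+2-j)(N_{\ref{SSL1}}+N_0)}(C)$ until the core path reaches level $B+1$, then switching to $F=\emptyset$ via Remark \ref{Rslideup} to lift the core above the top level of $E$, with the peeled-off side paths already in $\Gamma-E$ and each homotopy avoiding $C$. No gaps; the bookkeeping matches the paper's proof step for step.
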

\begin{proof}
Apply Lemma \ref{slideup} to $\gamma$ with $F=St^{(B-A+1)(N_{\ref{SSL1}} +N_0)}(C)$. This gives a homotopy $H_1'$  with image in $\Gamma-F$ of $\gamma$ to  $(\beta_1,\gamma_1,\beta_1')$ (here $\beta_1=(\tau,e)$ and $\beta_1'=(e'^{-1}, \tau'^{-1})$) where $\beta_1$ and $\beta_1'$ have image in $\Gamma-E$ and $p(\gamma)+1=p(\gamma_1)$. Next, apply Lemma \ref{slideup} to $\gamma_1$ with $F=St^{(B-A)(N_{\ref{SSL1}} +N_0)}(C)$ to produce $(\beta_2,\gamma_2, \beta_2')$. So after applying Lemma \ref{slideup} at most $B-A+2$ times, we have a homotopy $H'$ with image in $\Gamma-C$ of $\gamma$  to $(\beta,\tilde \gamma, \beta')$ (the paths $\beta$ and $\beta'$ are the concatenations of the $\beta_i$ and $\beta_i'$ respectively) where $\beta$ and $\beta'$ have image in $\Gamma-E$ and $p(\tilde \gamma)=B+1$. Of course the image of $\tilde \gamma$ may intersect $E$, but $\tilde \gamma$ has image in level $B+1$ (above $C$) and we may apply Lemma \ref{slideup} to $\tilde\gamma$ with $F=\emptyset$ (see Remark \ref{Rslideup}), to obtain a homotopy $\tilde H$ of $\tilde \gamma$ to $(\alpha_1, \tilde \gamma_1, \alpha_1')$ where $\alpha_1$ and $\alpha_1'$ are $\mathcal H_0$ paths in $\Gamma-E$, $p(\tilde \gamma_1)=B+2$ and $\tilde H$ has image in $[B+1,B+2]$ (and hence $\tilde H$ avoids $C$). Continue applying Lemma \ref{slideup} (with $F=\emptyset$) to obtain $\alpha_2,\ldots, \alpha_n$ and $\alpha_2',\ldots, \alpha_n'$ all with image in $\Gamma-E$ and path $\tilde \gamma_n$ with $p(\tilde \gamma_n)=B+n+1$ larger than the largest integer in $p(E)$. Let $\alpha=(\alpha_1,\ldots, \alpha_n)$, $\alpha'=(\alpha_1',\ldots, \alpha_n')$. We have $\gamma$ is homotopic by a homotopy with image in $\Gamma-C$ to $(\beta, \alpha, \tilde\gamma_n, \alpha',\beta')$, a path with image in $\Gamma-E$. 
\end{proof}

Let $C_1,C_2,\ldots$ be a collection of compact subcomplexes of $\Gamma$ such that $C_i$ is a subset of the interior of $C_{i+1}$ and $\cup_{i=1}^\infty C_i=\Gamma$. If  $a$ and $b$ are vertices of $v\Gamma_0$ and $p(v) =n$, then define the {\it degree} of $(a,b)$ (written $deg(a,b)$) to be the largest integer $m$ such that there is an edge path $\alpha$ in $\Gamma-C_m$ from $a$ to $b$ such that $p(\alpha)\subset (-\infty, n]$.

The next result is in analogy with Lemma 4.4 of \cite{HNN1}. The proof is elementary, but it is one of the main ideas of the overall argument. The other lemmas used in the proof of Theorem \ref{MMFI}  explain how to carefully slide $H_0$ paths upward and $t$-edges horizontally. Lemma \ref{Like4.4} will allow us to slide certain $H_0$ paths downward. It is in the proof of Lemma \ref{Like4.4} where we need $H_0$ to be finitely presented (as opposed to only finitely generated). 

\begin{lemma}\label{Like4.4} {\bf (Like4.4)} 
Suppose $D$ is a compact subcomplex of $\Gamma$ and $W$ and $L$ are unbounded path components of $v\Gamma_0-D$, (we allow $W=L$ as a possibility) then one of the following two statements holds:

(i) The set $W\times L$ contains a collection of pairs of vertices $(w_1,l_1)$, $(w_2,l_2),\ldots$ such that all $w_j$ are distinct, all $l_j$ are distinct and $deg(w_j,l_j)\geq j$, or

(ii) There are finite sets of vertices $S\subset W$ and $T\subset L$ such that all pairs of vertices of $(W-S)\times (L-T)$ have degree less than some fixed positive integer $N_1(W, L)$. 
\end{lemma}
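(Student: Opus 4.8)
The statement is a dichotomy, and I would obtain it by extracting (i) from the negation of (ii). Unwinding (ii): after deleting finite exceptional sets $S\subseteq W$ and $T\subseteq L$, the function $(w,l)\mapsto deg(w,l)$ is bounded on $(W-S)\times(L-T)$ by one integer $N_1(W,L)$. Its negation is that for every finite $S\subseteq W$, every finite $T\subseteq L$ and every $N$, some pair $(w,l)\in(W-S)\times(L-T)$ has $deg(w,l)\ge N$. So I would assume (ii) fails and build the sequence required by (i) by recursion: given $(w_1,l_1),\dots,(w_{j-1},l_{j-1})$, set $S=\{w_1,\dots,w_{j-1}\}$ and $T=\{l_1,\dots,l_{j-1}\}$, and apply the failure of (ii) with $N=j$ to get $(w_j,l_j)\in(W-S)\times(L-T)$ with $deg(w_j,l_j)\ge j$. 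Excluding the finitely many earlier choices forces $w_j\notin S$ and $l_j\notin T$, so the $w_j$ are automatically distinct, the $l_j$ are automatically distinct, and the sequence is exactly (i). Hence one of (i), (ii) always holds.

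This soft argument is already complete at the level of pure logic, so the genuine content must live in making $deg$ a well-defined, geometrically usable quantity, and this is where I expect the hypothesis that $H_0$ is finitely presented (not merely finitely generated) to be doing work. The degree is defined as a largest integer realized by an edge path below level $n$ that avoids $C_m$; for the bound $N_1(W,L)$ in (ii) to be meaningful, and for the high-degree pairs of (i) to be exploitable later (to ``slide $\mathcal H_0$-paths downward''), each such pair should come equipped with a connecting path that can be homotoped inside a prescribed complement. I would therefore not stop at the bare existence of a connecting path, but record, for a pair of degree $\ge m$, an actual edge path in $\Gamma-C_m$ with $p$-image in $(-\infty,n]$, and then use the finite relator set (the conjugation $2$-cells together with the fillings supplied by Lemmas \ref{SSL1} and \ref{oldslide}) to control homotopies of that path. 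Finite presentation most plausibly enters precisely here, because a bounded supply of relators is what permits van Kampen fillings that keep the relevant homotopies off the chosen compacta.

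The step I expect to be the main obstacle is thus not the recursion but this passage from the combinatorial $deg$ to homotopically controlled paths: verifying that ``$deg\ge m$'' can be upgraded to a connecting path one can fill and homotope within $\Gamma-C_m$ below level $n$, uniformly enough that an eventual bound $N_1(W,L)$ genuinely bounds the obstruction. Once degree is known to be realized by such honest, fillable paths, the dichotomy follows immediately from the recursion above, with the distinctness demanded in (i) coming for free. I would flag as a point to double-check the possibility $deg(w,l)=\infty$ (ruling out a largest $m$); this is a question about ends of the half-space $p^{-1}((-\infty,n])$ in the sense of Theorem \ref{ES}, and I would confirm that it does not disrupt either alternative before committing to the recursion.
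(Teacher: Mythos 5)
Your recursion argument is exactly the paper's proof: greedily select pairs $(w_j,l_j)$ with $deg(w_j,l_j)\geq j$, and if the selection fails at stage $j$, the previously chosen vertices serve as the finite sets $S$ and $T$ with $N_1(W,L)=j$, giving (ii). The additional discussion about upgrading $deg$ to fillable paths and the role of finite presentation concerns how the lemma is \emph{used} later (and where the definition of $deg$ lives), not the proof of the lemma itself, which the paper likewise treats as the purely combinatorial dichotomy you give in your first paragraph.
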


\begin{proof}
Begin selecting pairs of vertices $(w_j,l_j)$ satisfying the hypothesis of $(i)$. If for some $j\geq 1$, $(w_j,l_j)$ cannot be selected to satisfy hypothesis $(i)$, then all pairs of vertices of $(W-\{w_1,w_2,\ldots, w_{j-1})\times (L-\{l_1,l_2,\ldots, l_{j-1})$ have degree less than $j$ (so that $(ii)$ is satisfied).
\end{proof}

We are ready to combine our lemmas to prove the semistability part of Theorem \ref{MMFI}. Let $C$ be a finite subcomplex of $\Gamma$ and $r$ be the proper ray at $\ast$, each of whose edges is labeled $t$. Choose integers $A$ and $B$ such that
$$p(C)\subset [A,B].$$ 

\begin{figure}
\vbox to 3in{\vspace {-2.5in} \hspace {-.1in}
\hspace{-1 in}
\includegraphics[scale=1]{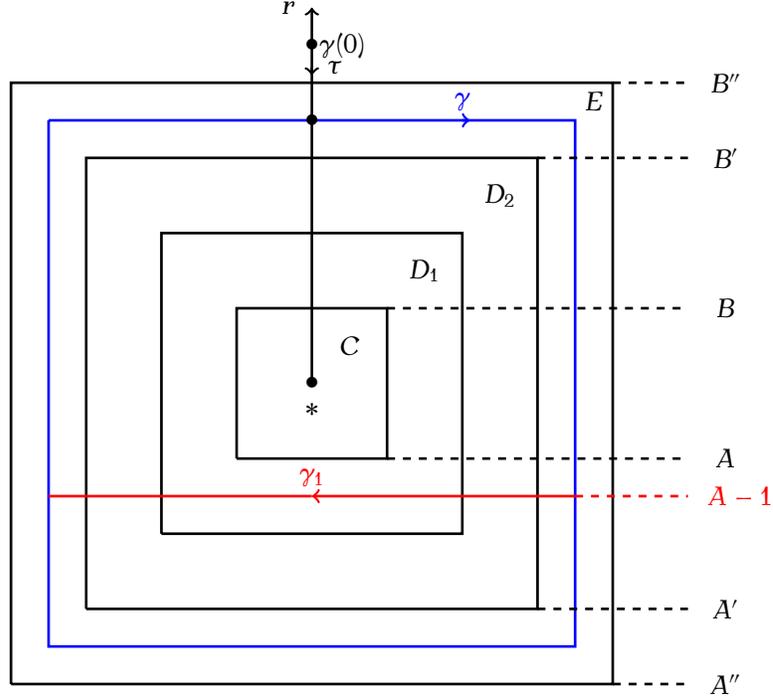}
\vss }
\vspace{-.3in}
\caption{The General Setup}
\label{Fig6}
\end{figure}

Let $D_1$ be a finite subcomplex of $\Gamma$ containing $St^{(B-A+2)(N_{\ref{SSL1}} +N_0)}(C)$. Furthermore,  if $v\Gamma_0$ intersects $D_1$ non-trivially and $p(v)=A-1$ (there are only finitely many of these), then we require that all components of $v\Gamma_0-D_1$ are unbounded. Next we define a finite subcomplex $D_2$ of $\Gamma$. Our first requirement is that $D_2$ contains $D_{\ref{push}}(D_1)$ (see Figure \ref{Fig6}). Suppose $v\Gamma_0$ intersects $D_1$ non-trivially, $p(v)=A-1$ and that $W$ and $L$ are unbounded components of $v\Gamma_0-D_1$ satisfying Lemma \ref{Like4.4} (ii) (where $D_1$ plays the role of $D$). Then we require that $D_2$ contains the finite sets $S\subset W$ and $T\subset L$ and the set $C_{N_1(W,L)}$ given by Lemma \ref{Like4.4} (recall that the $C_i$ are a cofinal sequence of finite subcomplexes of $\Gamma$). We also require that for each vertex $v\in \Gamma$ the space $v\Gamma_0-D_2$ is a union of unbounded components. An important fact follows immediately from Lemma \ref{Like4.4}:

\medskip

\noindent $(\ast)$ Suppose $v\Gamma_0$ intersects $D_1$ non-trivially, $p(v)=A-1$, and $\alpha$ is an edge path in $\Gamma-D_2$ that begins and ends in $v\Gamma_0$ such that $p(\alpha)\subset (-\infty, A-1]$. If the initial vertex of $\alpha$ belongs to the (unbounded) component $W$ of $v\Gamma_0-D_1$ and the terminal vertex of $\alpha$ belongs to the (unbounded) component $L$ of $v\Gamma_0-D_1$, then $W\times L$ satisfies condition $(i)$ of Lemma \ref{Like4.4}. 

\medskip

Assume 
$$p(D_2)\subset [A',B']\hbox{ and } r([0,B'])\subset D_2.$$ 
Let $E$ be an arbitrary finite subcomplex of $\Gamma$ (containing $D_2$). Enlarge $E$ so that for any vertex $v\in \Gamma$, $v\Gamma_0-E$ is a union of unbounded path components. If $\gamma$ is an edge path loop in $\Gamma-D_2$ and based on $r$, our goal is to show that $\gamma$ is homotopic $rel\{r\}$ to an edge path loop in $\Gamma-E$ by a homotopy in $\Gamma-C$ (see Theorem \ref{ssequiv}(2)). Say $\gamma (0)=r(K)$ for some $K>B'$. If $\tau$ is an edge path ending at $r(K)$, each of whose edges is labeled $t^{-1}$, then $\gamma$ is homotopic $rel\{r\}$ by a homotopy in $\Gamma-D_2$ to $(\tau, \gamma, \tau^{-1})$ (see Figure \ref{Fig6}). In particular, we may assume: 
$$p(E)\subset [A'',B'']) \hbox{ and } \gamma(0) =r(K)\hbox{ for }K>B''.$$  

We cannot assume that $\gamma$ is cyclically without backtracking. It may be that $\gamma$ has the form $(\tau, \gamma',\tau')$ where cyclically $\tau$ and $\tau'$ backtrack. If we eliminating $\tau$ and $\tau'$ our path $\gamma'$ may not be based on $r$. Unfortunately, we cannot apply Lemmas \ref{smallmove} or \ref{slideupward}.
Instead, we will apply Lemma \ref{UpOut} to $\gamma$, $D_2$ and $E$. Repeated applications of this lemma allow us to move the maximal $\mathcal H_0$ subpaths of $\gamma$ that lie below level $A-1$ to level $A-1$ (modulo some paths in $\Gamma-E$). Notice that if $\alpha$ is a maximal $H_0$ subpath of $\gamma$ in a lowest level $L<A-1$ of $\gamma$, then $\alpha$ has image in $\Gamma-D_2$ so that hypotheses ({\it A.}) and ({\it B.}) of Lemma \ref{UpOut} are satisfied. Apply Lemma \ref{UpOut} to all maximal $\mathcal H_0$ subpaths of $\gamma$ in level $L$. If $M$ is one of these homotopies, then $p(M)\subset [L,L+1]$ and so avoids $C$. Say $\hat \gamma$ is the resulting path. Then $\gamma$ is homotopic to $\hat \gamma$ by a homotopy that avoids $C$. The only subpaths of $\hat \gamma$ in level $L$ have image in $\Gamma-St(E)$. If $\alpha$ is a maximal $H_0$ subpath of $\hat \gamma$ in level $L+1$ then $\alpha$ is a concatenation of $H_0$ subpaths of $\gamma$ (which have image in $\Gamma-D_2$) and paths of the form $\gamma_i'$ arising from Lemma \ref{UpOut}. As the end points of these $\gamma_i'$ can be connected by edge paths in $\Gamma-D_2$ in levels $\leq L+1$, we conclude that if $\hat \alpha$ is a maximal $\mathcal H_0$ subpath of $\hat \gamma$ in level $L+1$ , then the end points of $\hat \alpha$ can be joined by a path $\beta$ in $\Gamma-D_2$ such that $p(\beta(t))\leq L+1$ for all $t$. If  $e$  and $d$ are the $t$ edges at the initial  and terminal vertex of $\hat \alpha$ then $e$ ($d$) is either an edge of $\gamma$ or is in $\Gamma-E$. In either case, $e$ and $d$ avoid $D_2$. Hence Lemma \ref{UpOut} can be applied to $(e^{-1}, \hat \alpha,d)$. Repeated applications of Lemma \ref{UpOut} shows:

\medskip

\noindent {\bf Fact 1.} {\it There is a homotopy $\gamma {\buildrel M_1\over \sim} \gamma_1$
with $p(M_1)\subset \Gamma-C$ such that for any vertex $v$ of $\gamma_1$ either  $p(v)\in [A-1,\infty)$ or $v\in \Gamma-E$, the homotopy $M_1$ fixes the edges of $\gamma$ in levels $A-1$ and above, and the only edges of $\gamma_1$ in levels $A$ and above are edges of $\gamma$ (and hence these edges avoid $D_2$). Any $t$-edge of $\gamma_1$ not in $\Gamma-E$ is also a $t$-edge of $\gamma$ and so is in $\Gamma-D_2$.  Furthermore: 

$(\ast \ast)$ Suppose $\tau$ is a maximal $\mathcal H_0$ subpath of $\gamma_1$ with $p(\tau)=A-1$. Let $v_0$ and $v_1$ be the end points $\tau$. Then Lemma \ref{UpOut} implies there is an edge path $\beta$ with $im(\beta)\subset \Gamma-D_2$ connecting $v_0$ to $v_1$ with $p(\beta)\subset (-\infty,A-1]$. If $W$ ($L$) is the component of $v\Gamma_0-D_1$ containing $v_0$ ($v_1$) then $(\ast)$ implies $W\times L$ satisfies condition $(i)$ of Lemma \ref{Like4.4}. }

\begin{figure}
\vbox to 3in{\vspace {-1.5in} \hspace {.3in}
\hspace{-1 in}
\includegraphics[scale=1]{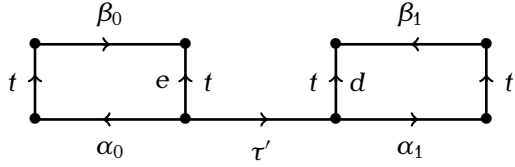}
\vss }
\vspace{-2in}
\caption{Moving $t$-Edges Horizontally}
\label{Fig7}
\end{figure}

The next step is to move all $t$ edges of $\gamma_1$ outside of $E$.  Apply Lemma \ref{push} (with $C=D_1$ and $D=D_2$) to each $t$-edge of $\gamma_1$ not already in $\Gamma-E$, to obtain a homotopy $\gamma_1{\buildrel M_2\over\sim} \gamma_2$ in $\Gamma-D_1$ where each $t$-edge of $\gamma_2$ has image in $\Gamma -E$. It is important to observe that if $e$ is a $t$ edge of $\gamma_1$ connecting levels $A-1$ and $A$, then the homotopy $M_2$ uses Lemma \ref{push} to replace $e$ by a path $(\alpha, e_1,\beta)$ in $\Gamma-D_1$ connecting the end points of $e$ such that $p(\alpha)=A-1$, $p(\beta)=A$ and $e_1$ (in $\Gamma-E$) has label $t$. Since $\alpha$ avoids $D_1$, there is an  analogue of $(\ast\ast)$ for $\gamma_2$:

\medskip

\noindent $(\overline{\ast \ast})$ {\it If $\tau$ is a maximal subpath of $\gamma_2$ in level $A-1$, then $\tau=(\alpha_0^{-1}, \tau', \alpha_1)$ where $\tau'$ is a maximal subpath of $\gamma_1$ in level $A-1$ and each of the paths $\alpha_0$ and $\alpha_1$ is either trivial (if the corresponding end point of $\tau'$ is already in $\Gamma-E$) or comes from our replacement of edges labeled $t$ at the end points of $\tau'$ (see Figure \ref{Fig7}). So $\alpha_0$ and $\alpha_1$ are in level $A-1$, avoid $D_1$ and both end in $\Gamma-E$. By $(\ast\ast)$ the end points of $\tau'$ are in unbounded components $W$ and $L$ of $v\Gamma_0-D_1$ and $W\times L$ satisfies condition $(i)$ of Lemma \ref{Like4.4}. As $\alpha_0$ and $\alpha_1$ avoid $D_1$, the end points of $\tau$ are in $W$ and $L$ as well.}

\begin{figure}
\vbox to 3in{\vspace {-1.5in} \hspace {-.1in}
\hspace{-1 in}
\includegraphics[scale=1]{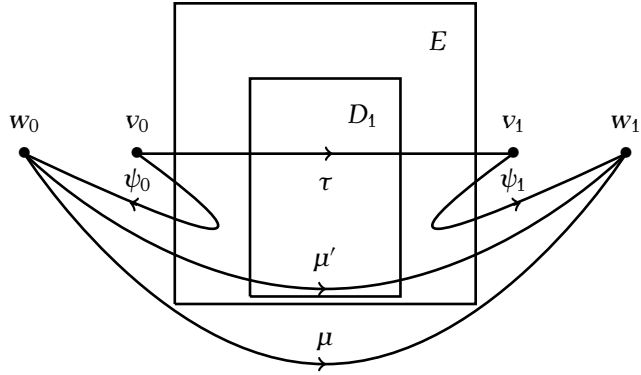}
\vss }
\vspace{-.8in}
\caption{Moving $\tau$ Down and Out}
\label{Fig8}
\end{figure}

Again let $\tau$ (with end points $v_0$ and $v_1$) be a maximal subpath of $\gamma_2$ in level $A-1$. Then $v_0,v_1\in v_0\Gamma_0-E$. If $W_i$ is the component of $v_0\Gamma_0-D_1$ containing $v_i$ (for $i\in\{0,1\}$) then ($\overline{\ast \ast}$) implies that $W_0\times W_1$ satisfies condition $(i)$ of Lemma \ref{Like4.4}. Choose $N>1$ such that $E\subset C_N$ and vertex pairs $(w_0, w_1)\in W_0\times W_1$ such that $deg(w_0,w_1)\geq N$. 
For $i\in\{0,1\}$ let $\psi_i$  be an edge path in $W_i$  from $v_i$  to $w_i$. Let $\mu$ be an edge path from $w_0$ to $w_1$ in $\Gamma-C_M$ ($\subset \Gamma-E$) such that $p(\mu)\subset (-\infty, A-1]$ (see Figure \ref{Fig8}).  Consider the edge path loop $\rho=(\tau, \psi_1, \mu^{-1}, \psi_0^{-1})$ (in levels $\leq A-1$). The loop $\rho$ is homotopic (in the image of $\rho$) to a loop $\rho'$ without backtracking. Lemma \ref{smallmove} can then be applied to $\rho'$.  After a sequence of backtracking elimination and applications of Lemma \ref{smallmove} the resulting loop 
has image in $v_0\Gamma_0$ and so is homotopically trivial in level $A-1$. In particular $\tau$ is homotopic to $(\psi_0, \mu, \psi_1^{-1})$ by a homotopy in levels $\leq A-1$ and hence in $\Gamma-C$. Since $\mu$ has image in $\Gamma-E$ and (for $i\in\{1,2\}$) $\psi_i$ is an edge path in level $A-1$ with end points in $\Gamma-E$ and image in $\Gamma-D_1$, we have shown:

\medskip 

$(\dagger)$ {\it The path $\gamma$ is homotopic $rel\{r\}$ to an edge path loop $\gamma_3$ in $\Gamma-D_1$ by a homotopy in $\Gamma-C$ such that each $t$-edge of $\gamma_3$ has image in $\Gamma-E$ and each edge of $\gamma_3$ in any level $<A-1$ is in $\Gamma-E$.} 

\medskip

Next suppose $\alpha$ is a maximal subpath of $\gamma_3$ in level $L\in [A-1,B]$. The end points of $\alpha$ are in $\Gamma-E$ and $\alpha$ has image in $\Gamma-D_1$. By Lemma \ref{slideC}, the path $\alpha$ is homotopic $rel\{0,1\}$ to an edge path in $\Gamma-E$ by a homotopy in $\Gamma-C$. Apply Lemma \ref{slideC} to all such $\alpha$. The resulting path, call it $\gamma_4$, is homotopic $rel\{0,1\}$ to $\gamma_3$ by a homotopy in $\Gamma-C$. Notice that each edge of $\gamma_4$ in a level $\leq B$ is in $\Gamma-E$. At this point we need only be concerned about maximal subpaths of $\gamma_4$ which are  in a level above level $B$ (and hence above the compact set $C$). Repeated applications of Lemma \ref{slideup} to such paths with $F=\emptyset$ (see Remark \ref{Rslideup}) move such paths out of $E$ by homotopies in $\Gamma-C$. This finishes the proof that $G$ has semistable fundamental group at $\infty$.
\end{proof}

\section{The Proof of the Main Theorem}\label{PMT}

Before proving Theorem \ref{SSDecomp} we prove Lemma \ref{SS2} (by an argument that parallels the proof of Theorem 3 of \cite{M4}).

\begin{lemma} \label{SS2} {\bf(SS2)} Suppose $G=A\ast_CB$ or $G=A\ast_C$, where $A$ and $B$ are finitely presented, $C$ is infinite, finitely generated  and in the first case, of finite index in $B$. If $A$ is 1-ended and semistable at $\infty$, then $G$ is 1-ended and semistable at $\infty$.
\end{lemma}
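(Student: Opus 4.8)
The plan is to prove both cases simultaneously through the geometry of the Bass--Serre tree of spaces, reducing semistability to the ray formulation Theorem \ref{ssequiv}(3) and lifting the \emph{overlap at infinity} mechanism of Theorem \ref{MComb} from the group level to the space level. First I would realize $G$ as the fundamental group of a finite graph of finite presentation $2$-complexes: a vertex complex $X_A$ (and $X_B$ in the amalgamated case) together with an edge complex $X_C$ for $C$, the attaching maps inducing the inclusions $C\hookrightarrow A$ and $C\hookrightarrow B$ (resp. the two embeddings of $C$ in $A$ defining the HNN structure). Passing to the universal cover $\tilde X$ gives a tree of spaces over the Bass--Serre tree $T$: the vertex spaces are copies of $\tilde X_A$ and $\tilde X_B$, and the edge spaces are copies of $\tilde X_C$. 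The standing facts I would record are that each $A$-vertex space is $1$-ended and semistable (the hypothesis on $A$), that each edge space is an unbounded copy of $\tilde X_C$ because $C$ is infinite, and that in the amalgamated case each $B$-vertex space lies in a bounded neighborhood of each of its (finitely many, since $[B:C]<\infty$) edge spaces, so that Lemma \ref{SSL1} lets me homotope any path in a $B$-vertex space with endpoints in the $C$-pattern into that pattern while keeping the homotopy inside a uniformly bounded star-neighborhood.

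I would establish $1$-endedness by the complement-connectivity method of Theorem \ref{MMFI1}, using Theorem \ref{ES}: given a compact $K$, enlarge it to $K'$ absorbing all bounded components of $v\tilde X_A-K$ and $v\tilde X_B-K$ for the finitely many vertex spaces meeting $K$; then each $1$-ended $A$-vertex space has connected complement outside $K'$, each infinite edge space survives outside $K'$ and joins adjacent vertex spaces, and in the amalgamated case Lemma \ref{FIclose} routes paths across a $B$-vertex through its edge spaces. Hence any two points outside $K'$ are joined outside $K$, so $G$ is $1$-ended.

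For semistability I would fix a compact $C_0$ and, in the spirit of the proof of Theorem \ref{MComb}, produce a compact $D_0$ so that any two edge-path rays based at a common vertex $v$ with image in $\tilde X-D_0$ are properly homotopic in $\tilde X-C_0$. I would decompose such a ray into maximal subrays lying in single vertex spaces, joined across edge spaces. Inside each $A$-vertex space I would invoke the semistability of $\tilde X_A$ (Theorem \ref{ssequiv} applied to that $1$-ended vertex space) to homotope and push subpaths far out while avoiding the vertex-space part of $C_0$; to pass between adjacent vertex spaces I would drive the ray into the common edge space and ride the \emph{infinite} copy of $\tilde X_C$ outward, the infiniteness of $C$ playing exactly the role that ``$A\cap B$ contains an infinite finitely generated subgroup'' plays in Theorem \ref{MComb}. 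In the amalgamated case a $B$-vertex is handled by first using Lemma \ref{SSL1} to homotope the relevant subpaths into the $C$-pattern, reducing everything to edge-space moves; in the HNN case, where $C$ need not have finite index in $A$, I would instead use the height function $p:\tilde X\to\mathbb R$ and slide along $t$-edges as in \S\ref{PFMMFI}, the crucial new leverage being that $A$ is now itself semistable, so within-vertex homotopies are free (unlike in the proof of Theorem \ref{MMFI}, where the base was not assumed semistable). Propagating these moves across the locally finite tree $T$, with each step controlled inside a uniformly bounded star-neighborhood and pushing toward infinity, yields the required proper homotopy in $\tilde X-C_0$.

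The main obstacle is the passage between vertex spaces when $C$ is many-ended: then the edge spaces are disconnected at infinity, and ``riding out the common edge space'' must be carried out in a \emph{matched} end on the two sides of the edge (and, across a $B$-vertex, consistently among its several edge spaces). Controlling this matching while keeping the homotopy proper and inside $\tilde X-C_0$ is where the infinite finitely generated hypothesis on $C$ must be combined carefully with Lemma \ref{SSL1} and the sliding lemmas. A secondary subtlety, inherited from the semistability half of Theorem \ref{MMFI}, is that one must respect the base vertex $v$ and therefore cannot cancel backtracking freely; maximal subpaths must be rerouted rather than deleted, as in the $(\dagger)$ step of \S\ref{PFMMFI}.
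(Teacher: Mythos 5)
Your outline correctly identifies the structure of the problem, but it leaves the central difficulty unsolved and proposes an unworkable route for the HNN case. The ``main obstacle'' you name --- that $C$ may be many-ended, so that riding an edge space outward requires matching ends across vertex spaces --- is precisely the point where the paper supplies the key idea, and your proposal stops at naming it. The paper never matches ends of $C$ at all: it fixes a geodesic edge path line $\ell$ in $\Gamma_C$ with antipodal rays $r^+,r^-$, and uses the geodesic property to see that for any compact $D$ only finitely many translates $v\ell$ have \emph{both} $vr^+$ and $vr^-$ meeting $D$. Given a proper ray $s=(s_0,s_1,\dots)$ based at $\ast$, one attaches at each vertex $w_i$ of $s$ a ray $r_i\in\{w_ir^+,w_ir^-\}$ so that only finitely many $r_i$ meet any compact set, and then stacks proper homotopies between $r_i$ and $(s_i,r_{i+1})$: when $\bar s_i\in\mathcal A^{\pm1}$ both rays lie in the same translate $w_i\Gamma_A$, and the 1-endedness and semistability of $A$ (via Theorem \ref{ssequiv}(3)--(4)) let one choose these homotopies so that only finitely many meet any compact set; when $\bar s_i\in\mathcal B^{\pm1}$, the finite index of $C$ in $B$ (via Lemmas \ref{FIclose} and \ref{SSL1}) replaces the $\mathcal B$-ray by a $\mathcal C$-ray with uniformly bounded tracking, after which the comparison again happens inside a single translate of $\Gamma_A$. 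All comparisons thus occur inside $1$-ended semistable $A$-vertex spaces, which is why no end-matching in the edge spaces is ever needed; this is the mechanism your sketch is missing, and it yields semistability directly in the form \ref{ssequiv}(4) (every ray at $\ast$ is properly homotopic to $r^+$), from which $1$-endedness also follows, rather than via the relative statement \ref{ssequiv}(3) you aim for.

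The second gap is the HNN case. You propose to rerun the sliding machinery of \S\ref{PFMMFI} with the height function, claiming semistability of $A$ as the new leverage. But that machinery (Lemmas \ref{FIclose}, \ref{SSL1}, \ref{oldslide}, \ref{smallmove}) depends essentially on the associated subgroup having \emph{finite index} in the base: that is what allows an arbitrary $\mathcal H_0$-path with endpoints in a coset of the edge group to be pushed into the edge pattern with bounded control and then conjugated up a level. In Lemma \ref{SS2}'s HNN case no finite-index hypothesis on $C\le A$ is made, and semistability of $A$ does not substitute for it, so this part of your plan fails as stated. The paper's route is much shorter: for $G=A\ast_C$ semistability is immediate from Theorem \ref{MTComb} (the single vertex group $A$ is finitely presented and semistable, the edge group $C$ is finitely generated), and only $1$-endedness requires an argument, which is done directly with the geodesic line in $\Gamma_C$, the $1$-endedness of the translates $v\Gamma_A$, and the conjugation relations to track rays across $t$-edges.
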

\begin{proof} We consider the case $G=A\ast_CB$ first. If $B=C$ or $A=C$, then $A$ has finite index in $G$ and so $G$ is 1-ended and semistable at $\infty$ (since semistability and the number of ends of a group are quasi-isometry invariants). We assume $A\ne C\ne B$. Let $\mathcal A\cup \mathcal B$ be a finite generating set for $G$, where $\mathcal A$ generates $A$, $\mathcal B$ generates $B$ and $\mathcal A\cap \mathcal B=\mathcal C$ generates $C$.  
Let $\mathcal P$ be a finite presentation for $G$ with generators $\mathcal A\cup \mathcal B$.  Let $\Gamma$ be the Cayley 2-complex for $\mathcal P$. We show $\Gamma$ has semistable fundamental group at $\infty$. Let $\Gamma_H\subset \Gamma$ be the corresponding Cayley graph of $H=A$, $B$ or $C$ with respect to $\mathcal A$, $\mathcal B$, and $\mathcal C$ respectively. Choose a geodesic edge path line $\ell=(\ldots , c_{-1}, c_0,c_1,\ldots)$ in $\Gamma_C$ (so that each edge $c_i$ has label $\bar c_i\in\mathcal C^{\pm 1}$. Assume the initial vertex of $c_0$ is $\ast$, the identity vertex of $\Gamma_C$). Let $r^+=(c_0,c_1,\ldots)$ and $r^-=(c_{-1}^{-1}, c_{-2}^{-1},\ldots)$ be the two (opposite direction) geodesic rays (beginning at $\ast$) of $\ell$. Recall that the vertices of $\Gamma$ are the elements of $G$. Given any compact set $D\subset \Gamma$ there are only finitely many vertices $v\in \Gamma$ such that both $vr^+$ and $vr^-$ intersect $D$. 

Let $s=(s_0,s_1,\ldots)$ be a proper edge path ray in $\Gamma$ with initial point $\ast$. Then the label of $s_i$ is $\bar s_i\in \mathcal A^{\pm 1}\cup\mathcal B^{\pm 1}$. Let $\ast=w_0,w_1,\ldots$ be the consecutive vertices of $s$. 
For any vertex $v\in \Gamma$ let $B(v,N)$ be the ball of radius $N$ about $v$ in the 1-skeleton of $\Gamma$. Select $r_0=r^+$, and for $i>0$, set $r_i$ equal to $w_ir^+$ or $w_ir^-$ so that for any  integer $N$ only finitely many $r_i$ intersect $B(\ast,N)$. (See Figure \ref{Fig9}.)

\begin{figure}
\vbox to 3in{\vspace {-1.5in} \hspace {-.7in}
\hspace{-1 in}
\includegraphics[scale=1]{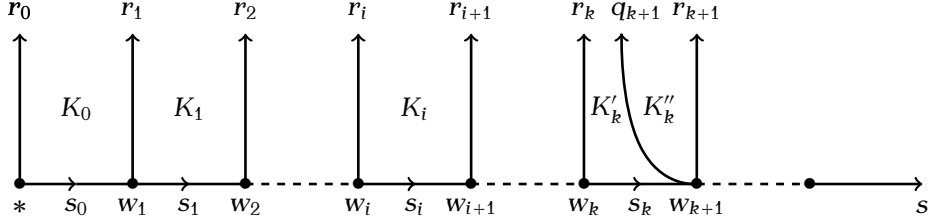}
\vss }
\vspace{-1.75in}
\caption{Stacking homotopies}
\label{Fig9}
\end{figure}

If $\bar s_{i}\in\mathcal A^{\pm 1}$ then the rays $r_i$ and $(s_{i}, r_{i+1})$ are both $\mathcal A$-rays and so are properly homotopic $rel\{w_i\}$ by a homotopy $K_i$ in $w_i\Gamma_A$ (see Theorem \ref{ssequiv}(4)). There may be infinitely many $j$ such that $w_j\Gamma_A=w_i\Gamma_A$. Let $J_i=\{j\geq 0: w_j\Gamma_A=w_i\Gamma_A\hbox{ and }\bar s_{j}\in \mathcal A^{\pm 1}\}$. Theorem \ref{ssequiv}(3) implies that the set of all $K_j$ such that $j\in J_i$, can be selected  so that only finitely many have image which intersects any given compact subset of $\Gamma$. Since only finitely many translates of $\Gamma_A$ intersect a given compact set, we may assume that: If $\bar s_{i}\in \mathcal A^{\pm1}$ then $K_i$ is defined and for any compact set $D$, only finitely many such $K_i$ have image that intersects $D$. 

If $\bar s_{k}\in \mathcal B^{\pm 1}$ then the ray $(s_{k}^{-1}, r_k)$ is a $\mathcal B$ ray. Since $C$ has finite index in $B$, there is a proper $\mathcal C$ ray $q_{k+1}$ at $w_{k+1}$ that is properly homotopic  $rel\{w_{k+1}\}$ to $(s_{k}^{-1}, r_k)$ by a homotopy $K_k'$  whose image is within $N_{\ref{SSL1}}+N_{\ref{FIclose}}$ of the image of $r_k$.  (Let $\beta_0$ be the trivial path at $w_{k+1}$. If $e_i$ is the $i^{th}$ edge of $(s_k^{-1}, r_k)$ then let $\beta_i$ be an edge path of length $\leq N_{\ref{FIclose}}$ from the terminal point of $e_i$ to a vertex of $w_{k+1}\Gamma_ C$ . Apply Lemma \ref{SSL1} to the paths $(\beta_{i-1}^{-1}, e_i, \beta_{i})$ and combine the resulting homotopies.)

Now $q_{k+1}$ and $r_{k+1}$ are both $\mathcal A$ (in fact, $\mathcal C$) rays at $w_{k+1}$ and so are properly homotopic by a homotopy $K_k''$ in $w_{k+1}\Gamma_A$. Again, Theorem \ref{ssequiv}(3) allows us to choose the $K_k''$  such that for any compact set $D$, only finitely many such $K_k''$ have image that intersect $D$. 



Combine $K_k'$ and $K_k''$ to form $K_k$, a proper homotopy of $r_k$ to $(s_{k},r_{k+1})$. Combining the $K_i$ for $i\geq 0$, provides a proper homotopy from $r^+$ to  $s$ (see Figure \ref{Fig9}). Hence every proper edge path ray at $\ast$ is properly homotopic to $r^+$ and $G$ is semistable at $\infty$ (see Theorem \ref{ssequiv}(4)). As any two proper rays in $\Gamma$ are properly homotopic, the space $\Gamma$ (and hence the group $G$) is 1-ended. 

In the second case semistability follows directly from Theorem \ref{MTComb}.
It remains to show that $A\ast_C$ is 1-ended. Let $\mathcal C\subset \mathcal A$ be finite generating sets for $C$ and $A$ respectively. Let $t$ be the stable letter and for each $c\in \mathcal C$, let $w_c$ be an $\mathcal A$ word such that $t^{-1} ct=w_c$ is a conjugation relation for $G$. Let $\Gamma$ be the Cayley graph of $G$ with respect to $\mathcal A\cup \{t\}$ and let $\Gamma_C\subset \Gamma$ be the Cayley graph of $C$ with respect to $\mathcal C$. Again choose a geodesic edge path line  $\ell=(\ldots , c_{-1}, c_0,c_1,\ldots)$ in $\Gamma_C$, so that each edge $c_i$ has label $\bar c_i\in\mathcal C^{\pm 1}$. Assume the initial vertex of $c_0$ is $\ast$, the identity vertex of $\Gamma_C$. Let $r^+=(c_0,c_1,\ldots)$ and $r^-=(c_{-1}^{-1}, c_{-2}^{-1},\ldots)$ be the two (opposite direction) geodesic rays (beginning at $\ast$) of $\ell$. Given any compact set $D\subset \Gamma$ there are only finitely many vertices $v\in \Gamma$ such that both $vr^+$ and $vr^-$ intersect $D$. Choose $E(D)$ a finite subcomplex of $\Gamma$ such that if $v$ is a vertex of $\Gamma-E$ then either $vr^+$ or $vr^-$ avoids $D$. It is enough to show that any two vertices $v$ and $w$ in $\Gamma-E$ can be joined by an edge path in $\Gamma-D$. Let $r_v$ be $vr^+$ or $vr^-$ avoiding $D$. Similarly select $r_w$ avoiding $D$. It is enough to show that $r_v$ and $r_w$ converge to the same end of $\Gamma$. Let $\alpha=(a_0,a_1,\ldots, a_n)$ be an edge path from $v$ to $w$. Assume $a_i$ has label $\bar a_i\in  \mathcal A^{\pm 1} \cup \{t^{\pm 1}\}$).  For $i\in \{1,\ldots, n\}$ let $v_i$ be the initial vertex of $a_i$ and let $r_i$ be $v_ir^+$.  Let $v_0=v$, $v_{n+1}=w$, $r_0=r_v$ and $r_{n+1}=r_w$. It is enough to show that $r_i$ and $r_{i+1}$ converges to the same end of $\Gamma$ for $i\in\{0,\ldots, n\}$. Choose $i\in \{0,1,\ldots, n\}$. If $\bar a_i\in \mathcal A^{\pm 1}$ then the proper rays $r_i$ and $(a_i, r_{i+1})$ form a proper line in the 1-ended space $v_i\Gamma_1$. Hence $r_i$ and $(a_i, r_{i+1})$ converge to the same end of $v_i\Gamma_A$. In particular, $r_i$ and $r_{i+1}$ converge to the same end of $\Gamma$. 
If $\bar a_i=t$ then write $r_i=(e_1,e_2,\ldots)$.  Let $t_k$ be the edge labeled $t$ at the initial vertex of $e_k$. There is an $\mathcal A$ path $z_j$ (labeled by one of the conjugation words $w_k^{\pm 1}$), connecting the end points of $(t_k^{-1}, e_k, t _{k+1})$. As $(z_1,z_2,\ldots)$ tracks $r_i$, it is proper and converge to the same end of $\Gamma$ as does $r_i$. Also $(z_1,z_2,\ldots)$ and $r_{i+1}$ are in the 1-ended space $v_{i+1}\Gamma_A$, so they converge to the same  end of $\Gamma$. Hence $r_i$ and $r_{i+1}$ converge to the same end of $\Gamma$. If $\bar a_i=t^{-1}$ then consider the $t$ edges at the vertices of $r_{i+1}$ and proceed as before. 
\end{proof}

\begin{proof} (of Theorem \ref{SSDecomp}) Suppose $\mathcal G$ is our graph of groups decomposition of $G$. First a reduction. Suppose there is a vertex $v$ of $\mathcal G$ such that $G_v$ is 1-ended and semistable at $\infty$. If there is an edge (loop) $e(v,v)$ of $\mathcal G$, then the HNN-extension of $G_v$ determined by $e(v,v)$ is 1-ended and semistable at $\infty$ by Lemma \ref{SS2}. In this situation, replace $v$ and $e(v,v)$ by a single vertex $v'$ with vertex group the HNN-extension of $G_v$. The resulting graph of groups is still a decomposition of $G$ and satisfies the hypothesis of our theorem. We may assume:

\medskip

\noindent (0) {\it If $v$ is a vertex of $\mathcal G$ and $G_v$ is 1-ended and semistable at $\infty$, then no edge at $v$ is a loop.}

Next, suppose there is a vertex $v$ of $\mathcal G$ such that $G_v$ is not both 1-ended and semistable at $\infty$ and an edge (loop) $e(v,v)$ (giving an HNN-extension). Then there are subgroups $G_1$ and $G_2$ of $G_v$ (both isomorphic to $G_e$) and an isomorphism $\phi:G_1\to G_2$ determining a subgroup of $G$ that is an HNN-extension of $G_v$. If either $G_1$ or $G_2$ has finite index in $G_v$, then Theorem \ref{MMFI} implies that the resulting HNN-extension (call it $G_{v'}$) of $G_v$ is 1-ended and semistable at $\infty$. In this case $v$ and the loop $e(v,v)$ are replaced by $v'$.
If $\mathcal G$ has only 1-vertex $v$, then our hypotheses and reductions imply that $\mathcal G$ cannot have an edge so that $G=G_v$. Our hypotheses imply that $G$ is 1-ended and semistable at $\infty$. Assume the result is true for $\mathcal G$ with $N\geq 1$ vertices. Now assume $\mathcal G$ has $N+1$ vertices. Suppose $e(v,w)$ is an edge of $\Gamma$, and the vertex group $G_v$  (of $\mathcal G$) is 1-ended and semistable at $\infty$. Our reduction (0)  implies that $v\ne w$. If $G_e$ has finite index in $G_w$,  then Lemma \ref{SS2} implies that the subgroup of $G$ generated by $G_v$ and $G_w$ is 1-ended and semistable at $\infty$. If $G_w$ is 1-ended and semistable at $\infty$ then Theorem \ref{MComb} implies that the subgroup of $G$ generated by $G_v$ and $G_w$ is 1-ended and semistable at $\infty$. In either case, reducing $\mathcal G$ by combining $e$ into a single vertex (with group generated by $G_v\cup G_w$) gives a graph of groups decomposition of $G$ with fewer vertices and satisfying the hypothesis of our theorem. Inductively, $G$ is 1-ended and semistable at $\infty$. So we may assume:

\medskip

\noindent (1) {\it If $v$ is a vertex of $\mathcal G$ and $G_v$ is  1-ended and semistable at $\infty$, then each edge $e(v,w)$ of $\mathcal G$ is such that $v\ne w$, $G_e$ has infinite index in $G_w$, and $G_w$ is not 1-ended and semistable at $\infty$.}

\medskip

Similarly, Theorems \ref{MMFI} and \ref{MainCM} imply that we may assume:

\medskip

\noindent (2) {\it No edge (loop) $e(v,v)$ of $\mathcal G$ is such that $G_e$ has finite index in $G_v$ and no edge $e(v,w)$ with $v\ne w$ is such that $G_e$ has finite index in both $G_v$ and $G_w$. }

\medskip





By (1), there is a vertex $v_0\in \mathcal G$ such that $G_{v_0}$ is not 1-ended and semistable at $\infty$. By our hypotheses there is an edge $e_0=e(v_0,v_1)$  of $\mathcal G$ such that $G_{e_0}$ has finite index in $G_{v_0}$. By (2), $v_0\ne v_1$ and $G_{e_0}$ has infinite index in $G_{v_1}$. By (1), $G_{v_1}$ is not 1-ended and semistable at $\infty$. By our hypothesis there is an edge $e_1=e(v_1,v_2)$  of $\mathcal G$ such that $G_{e_1}$ has finite index in $G_{v_1}$ (so $e_1\ne e_0$) and by (2), $v_1\ne v_2$ and  $G_{e_1}$ has infinite index in $G_{v_2}$. By (1), $G_{v_2}$ is not 1-ended and semistable at $\infty$.  Continuing, we arrive at a simple (non-crossing) edge path loop $\mathcal L$ in $\mathcal G$ (relabeling if necessary) $e_0,e_1,\ldots, e_n$ where $e_i=e(v_i, v_{i+1})$, $v_i\ne v_{i+1}$,  $v_{n+1}=v_0$, no $G_{v_i}$ is 1-ended and semistable at $\infty$, and $G_{e_i}$ has finite index in $G_{v_i}$ and infinite index in $G_{v_{i+1}}$.  It is enough to show the fundamental group $L$ of this loop of groups is 1-ended and semistable at $\infty$ (since then we could collapse the loop to a single vertex and the resulting graph of groups decomposition of $G$ would have fewer vertices and still satisfy the hypothesis of our theorem).

Let $A$ be the fundamental group of the graph of groups obtained by removing $e_n$ from $\mathcal L$. We consider $L$ to be an HNN-extension of $A$, with stable letter $t$ and $t^{-1}ct\in G_{v_0}$ for $c\in G_{e_n}<G_{v_{n}}$.   The group $C=G_{e_0}\cap G_{e_1}\cap \cdots \cap G_{e_n}$ has finite index in $G_{v_0}$. The subgroup of $L$ generated by $G_{v_0}$ and $t$ is an HNN extension with base $G_0$ and associated subgroup $C$ (of finite index in $G_0$). Theorem \ref{MMFI} implies this subgroup is finitely presented, 1-ended and semistable at $\infty$. Similarly each of the vertex groups of $\mathcal L$ belong to a subgroup of $L$ that is finitely presented, 1-ended and semistable at $\infty$. Repeated applications of Theorem \ref{MComb} imply that $L$ is 1-ended and semistable at $\infty$. 
\end{proof}

\bibliographystyle{amsalpha}
\bibliography{paper1}{}

\enddocument